%
\documentclass[10pt]{amsart}
\usepackage{amssymb,amsmath}
\usepackage{epsfig}
\usepackage{hyperref}
\usepackage{appendix}
%
%
%
%
%
%
%
\newtheorem{theorem}{Theorem}[section]
\newtheorem{remm}{Remark}[section]
\newtheorem{lemma}{Lemma}[section]


\newtheorem{remark}[remm]{Remark}

\newtheorem{proposition}[theorem]{Proposition}

\numberwithin{equation}{section}

\newcommand{\VV}{{\sf V}}

\newcommand{\uu}{{\widehat u}}
%
%

%
%

\newcommand{\rset}{{\mathbb R}}
\newcommand{\nset}{{\mathbb N}}

\newcommand{\bfdot}{\bf\dot}

%
\newcommand{\ken}{\ \ }

\newcommand{\ssy}{\scriptscriptstyle}

\newcommand{\half}{\frac{1}{2}}
\newcommand{\dtau}{\Delta\tau}
\newcommand{\dt}{\Delta{t}}
\newcommand{\dx}{\Delta{x}}
\newcommand{\fem}{{\sf Z}_h^r}
\newcommand{\Nstar}{N_{\star}}
\newcommand{\Jstar}{J_{\star}}
%
\pagestyle{plain}
\setlength{\textheight}{22.0truecm}
\setlength{\textwidth}{15.0truecm}
\setlength{\oddsidemargin}{0.00truecm}
\setlength{\evensidemargin}{0.00truecm}
%
%
%
%
%
%
%
\begin{document}
\title[]
{Crank-Nicolson finite element approximations\\
for a linear stochastic heat equation \\
with additive space-time white noise
}
%
%
%
%
%
%
\author[]
{Georgios E. Zouraris$^{\dag}$}
\thanks{%
$^{\dag}$Department of Mathematics and Applied Mathematics,
University of Crete, PO Box 2208, GR--710 03 Heraklion, Crete, Greece.
(e-mail: georgios.zouraris@uoc.gr)}
\subjclass{65M60, 65M15, 65C20}
\keywords{stochastic heat equation, additive space-time white noise, 
Crank-Nicolson time-stepping, finite element method,
a priori error estimates}
%
%
%
\maketitle
%
%
%
%
%
\begin{abstract}
We formulate an initial- and Dirichlet boundary- value problem for
a linear stochastic heat equation, in one space dimension, forced
by an additive space-time white noise.
First, we approximate the mild solution to the problem by the solution
of the regularized second-order linear stochastic parabolic
problem with random forcing proposed by Allen, Novosel and Zhang
(Stochastics Stochastics Rep., 64, 1998).
Then, we construct numerical approximations of the solution to the
regularized problem by combining the Crank-Nicolson method in
time with a standard Galerkin finite element method in space.
We derive strong a priori estimates of the modeling error 
made in approximating the mild solution to the problem
by the solution to the regularized problem, and of the numerical approximation
error of the Crank-Nicolson finite element method.
\end{abstract}
%
%
%
\section{Introduction}\label{SECT1}
%
%
\subsection{Formulation of the problem}
%
Let $T>0$, $D=(0,1)$ and $(\Omega,{\mathcal F},P)$ be a complete
probability space. Then, we consider the following initial- and
Dirichlet boundary- value problem for a linear stochastic heat
equation: find a stochastic function $u:[0,T]\times{\overline
D}\to\rset$ such that
\begin{equation}\label{PARAP}
\begin{gathered}
u_t=u_{xx}+{\dot W}\quad\text{\rm in}\quad(0,T]\times D,\\
u(t,\cdot)\big|_{\ssy\partial D}=0 \quad\forall\,t\in(0,T],\\
u(0,x)=0\quad\forall\,x\in D,\\
\end{gathered}
\end{equation}
a.s. in $\Omega$, where ${\dot W}$ denotes a space-time white
noise on $[0,T]\times D$ (see, e.g., \cite{Walsh86}, \cite{KXiong}).
The mild solution of the problem above is given by the formula
\begin{equation}\label{MildSol}
u(t,x)=\int_0^t\int_{\ssy D}G_{t-s}(x,y)\,dW(s,y),
\end{equation}
where $G_t(x,y)$ is the space-time Green kernel of the
solution to the deterministic parabolic problem: find a
deterministic function $v:[0,T]\times{\overline D}\to\rset$ such
that
\begin{equation}\label{Det_Parab}
\begin{gathered}
v_t =v_{xx}\quad\text{\rm in}\quad(0,T]\times D,\\
v(t,\cdot)\big|_{\ssy\partial D}=0 \quad\forall\,t\in(0,T],
\\
v(0,x)=v_0(x)\quad\forall\,x\in D,\\
\end{gathered}
\end{equation}
where $v_0:{\overline D}\rightarrow{\mathbb R}$ is a deterministic
initial condition. In particular, it holds that
\begin{equation}\label{deter_green}
v(t,x)=\int_{\ssy D}G_t(x,y)\,v_0(y)\,dy
\quad \forall\,(t,x)\in(0,T]\times{\overline D}
\end{equation}
and
\begin{equation}\label{GreenKernel}
G_t(x,y)=\sum_{k=1}^{\ssy\infty}e^{-\lambda_k^2 t}
\,\varepsilon_k(x)\,\varepsilon_k(y) \quad
\forall\,(t,x)\in(0,T]\times{\overline D},
\end{equation}
where $\lambda_k:=k\,\pi$ for $k\in\nset$, and
$\varepsilon_k(z):=\sqrt{2}\,\sin(\lambda_k\,z)$ for
$z\in{\overline D}$ and $k\in\nset$.
\par
The aim of the paper at hand is to analyze the convergence of
a Crank-Nicolson finite element method  for the approximation
of the mild solution $u$ to the problem \eqref{PARAP},
adapting properly the approach in \cite{Zouraris2017}.
%
\subsection{An approximate problem}
%
For $\Nstar$, $\Jstar\in\nset$, we define the
mesh-lengths $\dt:=\frac{T}{\Nstar}$ and
$\dx:=\frac{1}{J_{\star}}$, the nodes $t_n:=n\,\dt$
for $n=0,\dots,\Nstar$ and $x_j:=j\,\dx$ for $j=0,\dots,J_{\star}$,
and the subintervals $T_n:=(t_{n-1},t_n)$ and $D_j:=(x_{j-1},x_j)$
for $j=1,\dots,\Jstar$ and $n=1,\dots,\Nstar$. Also, for a nonempty set
$A$, we will denote by ${\mathcal X}_{\ssy A}$ the indicator function of $A$.
\par
Then, we consider the approximate stochastic parabolic problem
proposed in \cite{ANZ}, which is formulated as follows:
find stochastic function
${\widehat u}:[0,T]\times{\overline D}\rightarrow{\mathbb R}$ such that
\begin{equation}\label{AC2}
\begin{gathered}
{\widehat u}_t={\widehat u}_{xx}+{\mathcal W}
\quad\text{\rm in}\ken (0,T]\times D,\\
{\widehat u}(t,\cdot)\big|_{\ssy\partial D}=0
\quad\forall\,t\in(0,T],\\
{\widehat u}(0,x)=0\quad\forall\,x\in D,\\
\end{gathered}
\end{equation}
a.e. in $\Omega$, where ${\mathcal W}\in L^2((0,T)\times D)$
is a piecewise constant stochastic function given by
\begin{equation}\label{Dnoise_Def}
{\mathcal W}:=
\tfrac{1}{\dt\,\dx}\,
\sum_{n=1}^{\ssy\Nstar}\sum_{j=1}^{\ssy\Jstar}
{\mathcal R}_j^n\,{\mathcal X}_{\ssy T_n\times D_j},
\end{equation}
where
\begin{equation}\label{R_Def}
{\mathcal R}_j^n:=W(T_n\times D_j),
\quad\,n=1,\dots,\Nstar,\quad j=1,\dots,\Jstar.
\end{equation}
The standard theory of parabolic problems (see, e.g., \cite{LMag}),
yields
\begin{equation}\label{HatUform}
\widehat{u}(t,x)= \int_0^t\int_{\ssy D} G_{t-s}(x,y)
\,{\mathcal W}(s,y)\,dsdy
\quad\forall\,(t,x)\in[0,T]\times{\overline D},
\end{equation}
from which we conclude that:
the stochastic function ${\widehat u}$ has a finite noise structure depending on $\Nstar\Jstar$
random variables (see Remark~\ref{Rimarkos1}), and the space-time regularity of ${\widehat u}$
is higher than that of the mild solution $u$ to \eqref{PARAP}.
%
%
%
\begin{remark}\label{Rimarkos1}
If $S\subset [0,T]\times{\overline D}$ then $W(S)\sim N(0,|S|)$.
Thus, for $j=1,\dots,\Jstar$ and $n=1,\dots,\Nstar$, we have
$W(T_n\times D_j)\sim N(0,\dt\dx)$. Also,
if $S_1,S_2\subset [0,T]\times{\overline D}$ and $S_1\cap S_2=\varnothing$,
then $W(S_1)$ and $W(S_2)$ are independent random variables. Thus, the
random variables $(({\mathcal R}_j^n)_{j=1}^{\ssy\Jstar})_{n=1}^{\ssy\Nstar}$ are
independent. For further details we refer
the reader to \cite{Walsh86} and \cite{KXiong}.
\end{remark}
%
%
%
\subsection{The numerical method}\label{kefalaiaki}
\par
Let $M\in\nset$, $\dtau:=\tfrac{T}{M}$,
$\tau_m:=m\,\dtau$ for $m=0,\dots,M$ and
$\Delta_m:=(\tau_{m-1},\tau_m)$ for $m=1,\dots,M$.
Also, let $\fem\subset H_0^1(D)$ be a finite element space consisting of
functions which are piecewise polynomials of degree at most $r$
over a partition of $D$ in intervals with maximum mesh-length $h$.
Then, we constuct computable fully-discrete approximations of
${\widehat u}$ using the Crank-Nicolson method for time-stepping
and the standard Galerkin finite element method for space discretization.
The proposed method is as follows: first we set
\begin{equation}\label{FullDE1}
U_h^0:=0,
\end{equation}
and then, for $m=1,\dots,M$, we are seeking $U_h^m\in S_h^r$ such
that
\begin{equation}\label{FullDE2}
\left(U_h^m-U_h^{m-1},\chi\right)_{\ssy 0,D}
+\tfrac{\dtau}{2}\,\left(\partial(U_h^{m}+U_h^{m-1}),\partial\chi\right)_{\ssy 0,D}
=\int_{\ssy\Delta_m} ({\mathcal W},\chi)_{\ssy 0,D}\,ds
\quad\forall\,\chi\in S_h^r,
\end{equation}
where $(\cdot,\cdot)_{\ssy 0,D}$ is the usual $L^2(D)$ inner product.
%
\subsection{References and main results of the paper}
%
%
%
%
The Crank-Nicolson method for stochastic evolution equations has been analyzed
in \cite{Erika02} and \cite{Erika03} under the assumption that the additive space-time
noise is smooth in space.
In \cite{Walsh05},  the solution of a semilinear stochastic heat equation with
multiplicative noise is approximated by a numerical method  that combines
a $\theta-$time discretization method with a finite element method in space
over a uniform mesh.
Also, for the proposed family of methods an error analysis has been developed,
the outcome  of which depends on the value of $\theta$. In particular,
for the Backward Euler finite element method $(\theta=1)$, 
the error estimate obtained is of the form $O\big(\dtau^{\frac{1}{4}}+h^{\frac{1}{2}}\big)$,
which is in agreement with that in \cite{YubinY05} for the stochastic heat equation with
multiplicative noise.
However, for the Crank-Nicolson finite element method $(\theta=\tfrac{1}{2})$,
the corresponding error estimate is of the form $O\big(\dtau\,h^{-\frac{3}{2}}+h^{\half}\big)$,
which means that convergence follows when $\dtau=o(h^{\frac{3}{2}})$.
Taking into account that both methods must have the same order of convergence since the regularity
of the solution is low, one is wondering if the difference between the above error estimates
is caused by the fact that the Crank-Nicolson method is not strongly stable
while the Backward Euler method is.
Here, adapting properly the analysis in \cite{Zouraris2017}, we show that,
in the special case of an additive space time white noise, is that obtained
in \cite{YubinY05} for the Backward Euler finite element method and thus
no mesh conditions are required to ensure its convergence.
\par
The first step in our analysis is to show that  the solution ${\widehat u}$
to the regularized problem \eqref{AC2} converges to the mild solution $u$
to the problem \eqref{PARAP} when both $\dx$ and $\dt$ tend freely to zero.
This is established by proving in Theorem~\ref{model_theorem} the following
$L^{\infty}(L^2_{\ssy P}(L^2_x))$ {\it modeling error} bound:
\begin{equation*}
\max_{t\in[0,T]}\left(\,{\mathbb E}\left[
\|u(t,\cdot)-{\widehat u}(t,\cdot)\|_{\ssy L^2(D)}^2\right]\right)^{\ssy 1/2}
\leq\,C\,\left(\,\epsilon^{-\frac{1}{2}}\,\dx^{\frac{1}{2}-\epsilon}
+\dt^{\frac{1}{4}}\,\right)
\end{equation*}
for $\epsilon\in\left(0,\tfrac{1}{2}\right]$, which measures the effect of breaking the infinite
noise dimension of $u$ to the finite one of ${\widehat u}$.
%
%
\par
The second step is the estimation of the discretization error of
the Crank-Nicolson finite element method for the approximation of ${\widehat u}$
that is  formulated in Section~\ref{kefalaiaki}. We achieve the derivation of a discrete in time
$L^{\infty}_t(L^2_{\ssy P}(L^2_x))$ error estimate of the following
form:
\begin{equation*}
\max_{0\leq{m}\leq{\ssy M}}\left(\,
{\mathbb E}\left[\|U_h^m-{\widehat u}(\tau_m,\cdot)\|_{\ssy L^2(D)}\right]\right)^{\ssy 1/2}
\leq\,{\sf C}\,\left(\,
\epsilon_1^{-\frac{1}{2}}\,\dtau^{\frac{1}{4}-\epsilon_1}
+\epsilon_2^{-\frac{1}{2}}\,h^{\frac{1}{2}-\epsilon_2}\,\right)
\end{equation*}
for $\epsilon_1\in\left(0,\tfrac{1}{4}\right]$ and $\epsilon_2\in\left(0,\tfrac{1}{2}\right]$,
where the constant ${\sf C}$ is independent of $\dt$ and $\dx$.
To get the latter estimate, we use a discrete Duhamel principle (cf. \cite{YubinY05}, \cite{BinLi})
that is based on the representation of the numerical approximations via the outcome
of a modified Crank-Nicolson method for the deterministic problem. The analysis is moving along the lines of the
convergence analysis developed in \cite{Zouraris2017} for a Crank-Nicolson finite
element method proposed to approximate the mild solution of a fourth order, linear
parabolic problem with additive space time white noise.
%
%
\par
Let us give an epigrammatic description of the paper.
In Section~\ref{Section2} we introduce notation and recall
several known results.
In Section~\ref{Section3} we estimate the modeling error.
In Section~\ref{Section4} we introduce and analyze the convergence
of modified Crank-Nicolson time-discrete and fully-discrete approximations
of $v$.
Finally, Section~\ref{Section5} contains the error analysis of the
Crank-Nicolson finite element approximations of ${\widehat u}$
formulated in Section~\ref{kefalaiaki}.
%
%
%
%
%
%
%
\section{Preliminaries}\label{Section2}
We denote by $L^2(D)$ the space of the Lebesgue measurable
functions that are square integrable on $D$ with respect to
the Lebesgue measure $dx$, equipped with the standard norm
$\|g\|_{\ssy 0,D}:= (\int_{\ssy D}|g(x)|^2\,dx)^{\ssy 1/2}$
for $g\in L^2(D)$, which is derived by the inner product
$(g_1,g_2)_{\ssy 0,D}
:=\int_{\ssy D}g_1(x)g_2(x)\,dx$ for $g_1$, $g_2\in L^2(D)$.
\par
For $s\in\nset_0$, we denote by $H^s(D)$ the Sobolev space of functions
having generalized derivatives up to order $s$ in $L^2(D)$,
by $\|\cdot\|_{\ssy s,D}$ its usual norm, i.e. $\|g\|_{\ssy s,D}:=(\sum_{\ell=0}^s
\|\partial^{\ell}g\|_{\ssy 0,D}^2)^{\ssy 1/2}$ for $g\in H^s(D)$,
and by $|\cdot|_{\ssy s,D}$ the seminorm $|g|_{\ssy s,D}:=
\|\partial^sg\|_{\ssy 0,D}$ for $g\in H^s(D)$.
Also, by $H_0^1(D)$ we denote the subspace of $H^1(D)$
consisting of functions that vanish at the endpoints of $D$ in
the sense of trace, where the, well-known,
Poincar{\'e}-Friedrich inequality holds, i.e.,
\begin{equation}\label{Poincare}
\|g\|_{\ssy 0,D}\leq\,C_{\ssy P\!F}\,|g|_{\ssy 1,D}
\quad\forall\,g\in H^1_0(D).
\end{equation}
\par
The sequence of pairs
$\left\{\big(\lambda_i^2,\varepsilon_i\big)\right\}_{i=1}^{\infty}$ is a
solution to the eigenvalue/eigenfunction problem: find nonzero
$z\in H^2(D)\cap H_0^1(D)$ and $\lambda\in\rset$ such that
$-z''=\lambda\,z$ in $D$. Since
$(\varepsilon_i)_{i=1}^{\infty}$ forms a complete orthonormal system
in $\left(\,L^2(D),(\cdot,\cdot)_{\ssy 0,D}\,\right)$, for
$s\in\rset$, we define the following subspace of $L^2(D)$
\begin{equation*}
{\mathcal V}^s(D):=\left\{g\in L^2(D):\quad\sum_{i=1}^{\ssy\infty}
\lambda_{i}^{2s} \,(g,\varepsilon_i)^2_{\ssy 0,D}<\infty\,\right\}
\end{equation*}
provided with the natural norm
$\|g\|_{\ssy{\mathcal V}^s}:=\big(\,\sum_{i=1}^{\infty}
\lambda_{i}^{2s}\,(g,\varepsilon_i)^2_{\ssy 0,D}
\,\big)^{\ssy 1/2}$ for $g\in{\mathcal V}^s(D)$.
For $s\ge 0$, the space $({\mathcal V}^s(D),\|\cdot\|_{\ssy{\mathcal V}^s})$ is a complete
subspace of $L^2(D)$ and we define
$({\bfdot H}^s(D),\|\cdot\|_{\ssy{\bfdot H}^s})
:=({\mathcal V}^s(D),\|\cdot\|_{\ssy{\mathcal V}^s})$.
For $s<0$, the space $({\bfdot H}^s(D),\|\cdot\|_{\ssy{\bfdot H}^s})$  is defined
as the completion of $({\mathcal V}^s(D),\|\cdot\|_{\ssy{\mathcal V}^s})$, or,
equivalently, as the dual of $({\bfdot H}^{-s}(D),\|\cdot\|_{\ssy{\bfdot H}^{-s}})$.
\par
Let $m\in\nset_0$. It is well-known (see, e.g., \cite{Thomee}) that
\begin{equation}\label{dot_charact}
{\bfdot H}^m(D)=\left\{\,g\in H^m(D):
\quad\partial^{2i}g\left|_{\ssy\partial D}\right.=0
\quad\text{\rm if}\quad 0\leq{2\,i}<m\,\right\}
\end{equation}
and there exist constants $C_{m,{\ssy A}}$ and $C_{m,{\ssy B}}$
such that
\begin{equation}\label{H_equiv}
C_{m,{\ssy A}}\,\|g\|_{\ssy m,D} \leq\|g\|_{\ssy{\bfdot H}^m}
\leq\,C_{m,{\ssy B}}\,\|g\|_{\ssy m,D}
\quad \forall\,g\in{\bfdot H}^m(D).
\end{equation}
Also, we define on $L^2(D)$ the negative norm
$\|\cdot\|_{\ssy -m, D}$ by
\begin{equation*}
\|g\|_{\ssy -m, D}:=\sup\left\{ \tfrac{(g,\varphi)_{\ssy 0,D}}
{\|\varphi\|_{\ssy m,D}}:\quad \varphi\in{\bfdot H}^m(D)
\quad\text{\rm and}\quad\varphi\not=0\right\} \quad\forall\,g\in
L^2(D),
\end{equation*}
for which, using \eqref{H_equiv}, it is easy to conclude that
there exists a constant $C_{-m}>0$ such that
\begin{equation}\label{minus_equiv}
\|g\|_{\ssy -m,D}\leq\,C_{-m}\,\|g\|_{{\bfdot H}^{-m}}
\quad\forall\,g\in L^2(D).
\end{equation}
\par
Let ${\mathbb L}_2=(L^2(D),(\cdot,\cdot)_{\ssy 0,D})$ and
${\mathcal L}({\mathbb L}_2)$ be the space of linear, bounded
operators from ${\mathbb L}_2$ to ${\mathbb L}_2$. An
operator $\Gamma\in {\mathcal L}({\mathbb L}_2)$ is
Hilbert-Schmidt, when $\|\Gamma\|_{\ssy\rm
HS}:=\left(\,\sum_{i=1}^{\infty} \|\Gamma\varepsilon_i\|^2_{\ssy
0,D}\,\right)^{\ssy 1/2}$ is finite, where $\|\Gamma\|_{\ssy\rm HS}$ is
the so called Hilbert-Schmidt norm of $\Gamma$.
%
%
We note that the quantity $\|\Gamma\|_{\ssy\rm HS}$ does not
change when we replace $(\varepsilon_i)_{i=1}^{\infty}$ by
another complete orthonormal system of ${\mathbb L}_2$.
It is well known (see, e.g., \cite{DunSch}, \cite{Lord}) that an operator
$\Gamma\in{\mathcal L}({\mathbb L}_2)$ is Hilbert-Schmidt iff
there exists a measurable function $\gamma:D\times D\rightarrow{\mathbb
R}$ such that $\Gamma[v](\cdot)=\int_{\ssy D}\gamma(\cdot,y)\,v(y)\,dy$
for $v\in L^2(D)$, and then, it holds that
\begin{equation}\label{HSxar}
\|\Gamma\|_{\ssy\rm HS} =\left(\int_{\ssy D}\int_{\ssy
D}\gamma^2(x,y)\,dxdy\right)^{\ssy 1/2}.
\end{equation}
Let ${\mathcal L}_{\ssy\rm HS}({\mathbb L}_2)$ be the set of
Hilbert Schmidt operators of ${\mathcal L}({\mathbb L}^2)$ and
$\Phi\in L^2([0,T];{\mathcal L}_{\ssy\rm HS}({\mathbb L}_2))$.
Also, for a random variable $X$, let ${\mathbb E}[X]$ be its
expected value, i.e., ${\mathbb E}[X]:=\int_{\ssy\Omega}X\,dP$.
Then, the It{\^o} isometry property for stochastic integrals reads
\begin{equation}\label{Ito_Isom}
{\mathbb E}\left[\left\|\int_0^{\ssy T}\Phi\,dW\right\|_{\ssy 0,D}^2\right]
=\int_0^{\ssy T}\|\Phi(t)\|_{\ssy\rm HS}^2\,dt.
\end{equation}
\par
We recall that (see Appendix A in \cite{KZ2009}): if $c_{\star}>0$, then
\begin{equation}\label{SR_BOUND}
\sum_{i=1}^{\ssy\infty}\tfrac{1}{\lambda_i^{1+c_{\star}\nu}}
\leq\,\left(\tfrac{1+2c_{\star}}{c_{\star}\pi}\right)\,\nu^{-1}
\quad\forall\,\nu\in(0,2],
\end{equation}
and if $({\mathcal H},(\cdot,\cdot)_{\ssy{\mathcal H}})$ is a
real inner product space, then
\begin{equation}\label{innerproduct}
(g-v,g)_{\ssy{\mathcal H}}
\ge\,\tfrac{1}{2}\,\left[\,(g,g)_{\ssy{\mathcal H}}
-(v,v)_{\ssy{\mathcal H}}\,\right]\quad\forall\,g,v\in{\mathcal H}.
\end{equation}
\par
Also, for any $L\in{\mathbb N}$ and
functions $(v^{\ell})_{\ell=0}^{\ssy L}\subset L^2(D)$ we set
$v^{\ell-\half}:=\tfrac{1}{2}(v^{\ell}+v^{\ell-1})$
for $\ell=1,\dots,L$. Finally, for $\alpha\in[0,1]$ and for $n=0,\dots,M-1$,
we set $\tau_{n+\alpha}:=\tau_n+\alpha\,\dtau$.
%
\subsection{A projection operator}
%
Let ${\mathcal O}:=(0,T)\times D$,
${\mathfrak G}_{\ssy\Nstar}:=\text{\rm span}({\mathcal X}_{\ssy T_n})_{n=1}^{\ssy\Nstar}$,
${\mathfrak G}_{\ssy\Jstar}:=\text{\rm span}({\mathcal X}_{\ssy D_j})_{j=1}^{\ssy\Jstar}$
and ${\sf\Pi}:L^2({\mathcal O})\rightarrow{\mathfrak G}_{\ssy\Nstar}\otimes{\mathfrak G}_{\ssy\Jstar}$
be defined by
\begin{equation}\label{Defin_L2}
{\sf\Pi}{g}(s,y):=\tfrac{1}{\dt\,\dx}\,\sum_{n=1}^{\ssy\Nstar}
\sum_{j=1}^{\ssy\Jstar}\left(\int_{\ssy T_n}\int_{\ssy D_j}
g(t,x)\,{dx}dt\right)\,{\mathcal X}_{\ssy T_n}(s)\,{\mathcal X}_{\ssy D_j}(y)
\quad\forall\,(s,y)\in{\mathcal O},
\end{equation}
which is the usual $L^2({\mathcal O})-$projection onto the finite dimensional space
${\mathfrak G}_{\ssy\Nstar}\otimes{\mathfrak G}_{\ssy\Jstar}$.
In the lemma below, we connect the projection operator ${\sf\Pi}$ to
the stochastic load ${\mathcal W}$ (cf. Lemma~2.1 in \cite{KZ2010},
\cite{ANZ}).
%
%
%
%
\begin{lemma}\label{Lhmma1}
For $g\in L^2({\mathcal O})$, it holds that
\begin{equation}\label{WNEQ2}
\int_0^{\ssy T}\int_{\ssy D}\,{\sf\Pi}{g}(s,y)\,dW(s,y)
=\int_0^{\ssy T}\int_{\ssy D}{\mathcal W}(t,x)\,g(t,x)\,dxdt.
\end{equation}
\end{lemma}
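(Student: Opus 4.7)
The plan is to expand both sides explicitly using the definitions of $\mathsf{\Pi}$ and $\mathcal{W}$ and observe that they reduce to the same finite linear combination of the Gaussian random variables $\mathcal{R}_j^n$.

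First, I would apply the definition \eqref{Defin_L2} to write
\begin{equation*}
\mathsf{\Pi}g(s,y)=\sum_{n=1}^{\Nstar}\sum_{j=1}^{\Jstar}c_{n,j}\,\mathcal{X}_{T_n}(s)\,\mathcal{X}_{D_j}(y),
\qquad c_{n,j}:=\tfrac{1}{\dt\,\dx}\int_{T_n}\int_{D_j}g(t,x)\,dxdt,
\end{equation*}
so that $\mathsf{\Pi}g$ is a deterministic step function on the rectangles $T_n\times D_j$. Since the coefficients $c_{n,j}$ are deterministic scalars and the rectangles are pairwise disjoint, linearity of the stochastic integral with respect to the white noise $W$ (see \cite{Walsh86}, \cite{KXiong}) gives
\begin{equation*}
\int_0^T\int_D \mathsf{\Pi}g(s,y)\,dW(s,y)
=\sum_{n=1}^{\Nstar}\sum_{j=1}^{\Jstar} c_{n,j}\,W(T_n\times D_j)
=\sum_{n=1}^{\Nstar}\sum_{j=1}^{\Jstar} c_{n,j}\,\mathcal{R}_j^n,
\end{equation*}
using \eqref{R_Def} in the last equality.

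Next, I would compute the right-hand side using \eqref{Dnoise_Def}. Substituting the definition of $\mathcal{W}$ and exchanging the (finite) sum with the integral,
\begin{equation*}
\int_0^T\int_D \mathcal{W}(t,x)\,g(t,x)\,dxdt
=\tfrac{1}{\dt\,\dx}\sum_{n=1}^{\Nstar}\sum_{j=1}^{\Jstar}\mathcal{R}_j^n\int_{T_n}\int_{D_j} g(t,x)\,dxdt
=\sum_{n=1}^{\Nstar}\sum_{j=1}^{\Jstar} c_{n,j}\,\mathcal{R}_j^n.
\end{equation*}
Comparing the two displayed expressions yields \eqref{WNEQ2}.

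There is no genuine obstacle here; the content of the lemma is essentially a restatement of the fact that $\mathcal{W}$ is the $L^2(\mathcal{O})$-projection (onto $\mathfrak{G}_{\Nstar}\otimes\mathfrak{G}_{\Jstar}$) of the white noise differential, paired in duality with $g$. The only point worth being careful about is justifying the interchange that lets one pull the deterministic coefficients $c_{n,j}$ outside the stochastic integral, which is immediate for simple (step) integrands in the Walsh construction.
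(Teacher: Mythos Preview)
Your proof is correct and follows essentially the same approach as the paper: both expand $\mathsf{\Pi}g$ as a step function, use linearity of the stochastic integral to reduce the left-hand side to $\tfrac{1}{\dt\,\dx}\sum_{n,j}\big(\int_{T_n}\int_{D_j}g\big)\,W(T_n\times D_j)$, and then recognize this as the right-hand side via the definitions of $\mathcal{W}$ and $\mathcal{R}_j^n$. The paper compresses this into a single chain of equalities, whereas you compute the two sides separately, but the content is identical.
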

%
%
%
%
%
%
\begin{proof}
Using the properties of the stochastic intergral (see, e.g., \cite{Walsh86}) 
along with \eqref{Defin_L2}, \eqref{Dnoise_Def} and \eqref{R_Def}, we obtain
\begin{equation*}
\begin{split}
\int_0^{\ssy T}\int_{\ssy D}{\sf\Pi}g(s,y)\,dW(s,y)
%
=&\tfrac{1}{\dt\,\dx}\,\sum_{n=1}^{\ssy\Nstar}\sum_{j=1}^{\ssy\Jstar}
\left(\int_{\ssy T_n}\int_{\ssy D_j}g(t,x)\;dxdt\right)\,W(T_n\times D_j)\\
%
=&\int_0^{\ssy T}\int_{\ssy D}{\mathcal W}(t,x)\,g(t,x)\,dxdt.
\end{split}
\end{equation*}
\end{proof}
%
%
%
%
Since it holds that
\begin{equation*}
\int_0^{\ssy T}\int_{\ssy D}{\sf\Pi}g(s,y)\,\varphi\,dyds=\int_0^{\ssy T}\int_{\ssy D}g(s,y)\,\varphi(s,y)\,dsdy
\quad\forall\,\varphi\in{\mathfrak G}_{\ssy\Nstar}\otimes{\mathfrak G}_{\ssy\Jstar},
\quad\forall\,g\in L^2({\mathcal O}),
\end{equation*}
we, easily, conclude that
\begin{equation}\label{peykos1}
\int_0^{\ssy T}\int_{\ssy D}({\sf\Pi}g(s,y))^2\,dsdy\leq\,\int_0^{\ssy T}\int_{\ssy D}(g(s,y))^2\,dsdy
\quad\forall\,g\in L^2({\mathcal O}).
\end{equation}
%
%
\subsection{Linear elliptic and parabolic operators}
For given $f\in L^2(D)$, let $T_{\ssy E}f\in {\bfdot H}^2(D)$ be the solution
to the problem: find $v_{\ssy E}\in{\bfdot H}^2(D)$ such that
\begin{equation}\label{TwoPoint}
v_{\ssy E}''=f\quad\text{\rm in}\quad D,
\end{equation}
i.e. $T_{\ssy E}f:=v_{\ssy E}$. It is well-known that
\begin{equation}\label{TB-prop1}
(v_1,T_{\ssy E}v_2)_{\ssy 0,D}
=-\left(\,\partial(T_{\ssy E}v_1),\partial(T_{\ssy E}v_2\,)\right)_{\ssy 0,D}
=(T_{\ssy E}v_1,v_2)_{\ssy 0,D} \quad\forall\,v_1,v_2\in L^2(D),
\end{equation}
and, for $m\in{\mathbb N}_0$, there exists a constant $C_{\ssy E}^m>0$ such that
\begin{equation}\label{ElReg1}
\|T_{\ssy E}f\|_{\ssy m,D}\leq \,C_{\ssy E}^m\,\|f\|_{\ssy m-2, D},
\quad\forall\,f\in H^{\max\{0,m-2\}}(D).
\end{equation}
%
%
\par
Let $\left({\mathcal S}(t)v_0\right)_{t\in[0,T]}$ be the standard semigroup
notation for the solution $v$ of \eqref{Det_Parab}. Then
(see, e.g., \cite{Thomee}) for $\ell\in{\mathbb N}_0$, $\beta\ge0$, $p\ge0$ and
$q\in\left[0,p+2\ell\right]$, there exists a constant $C_{p,q,\ell}>0$ such
that
\begin{equation}\label{Reggo0}
\big\|\partial_t^{\ell}{\mathcal S}(t)v_0\big\|_{\ssy {\bfdot H}^p}
\leq \,C_{p,q,\ell}\,\,\,t^{-\frac{p-q}{2}-\ell}
\,\,\,\|v_0\|_{\ssy {\bfdot
H}^q}\quad\forall\,t>0,\ken\forall\,v_0\in{\bfdot H}^q(D),
\end{equation}
and a constant $C_{\beta}>0$ such that
\begin{equation}\label{Reggo3}
\int_{t_a}^{t_b}(s-t_a)^{\beta}\,
\big\|\partial_t^{\ell}{\mathcal S}(s)v_0 \big\|_{\ssy {\bfdot H}^p}^2\,ds
\leq\,C_{\beta}\, \|v_0\|^2_{\ssy {\bfdot H}^{p+2\ell-\beta-1}}
\end{equation}
for all $v_0\in{\bfdot H}^{p+2\ell-\beta-1}(D)$ and $t_a$, $t_b\in[0,T]$ with $t_a<t_b$.
\subsection{Discrete operators}
Let $r\in{\mathbb N}$ and $\fem\subset H_0^1(D)$ be a finite
element space consisting of functions which are
piecewise polynomials of degree at most $r$ over a partition of $D$ in
intervals with maximum mesh-length $h$. It is well-known (cf.,
e.g., \cite{Cia}, \cite{BrScott}) that there exists a constant
$C_{\ssy\rm FE}>0$ such that
\begin{equation}\label{pin0}
\inf_{\chi\in S_h^r}\|g-\chi\|_{\ssy 1,D}
\leq\,C_{\ssy\rm FE}\,h\,\|g\|_{\ssy 2,D}
\quad\,\forall\,g\in H^2(D)\cap H_0^1(D).
\end{equation}
Then, we define the discrete Laplacian operator
$\Delta_h:\fem\to\fem$ by
$(\Delta_h\varphi,\chi)_{\ssy 0,D} =(\partial\varphi,\partial\chi)_{\ssy 0,D}$,
for $\varphi,\chi\in\fem$,
the $L^2(D)-$projection operator $P_h:L^2(D)\to\fem$ by
$(P_hf,\chi)_{\ssy 0,D}=(f,\chi)_{\ssy 0,D}$
for $\chi\in\fem$ and $f\in L^2(D)$,
and the standard Galerkin finite element approximation
$v_{{\ssy E},h}\in\fem$ of the solution $v_{\ssy E}$ to \eqref{TwoPoint} is
specified by requiring
\begin{equation}\label{fem1}
-\Delta_hv_{{\ssy E},h}=P_hf.
\end{equation}
\par
Let $T_{{\ssy E},h}:L^2(D)\to\fem$ be the solution operator
of the finite element method \eqref{fem1}, i.e. $T_{{\ssy E},h}f:=v_{{\ssy
E},h}=-\Delta_h^{-1}P_hf$ for $f\in L^2(D)$. Then, we can easily conclude
that
\begin{equation}\label{Frako1}
(f,T_{{\ssy E},h}g)_{\ssy 0,D}=
-(\partial(T_{{\ssy E},h}f),\partial(T_{{\ssy E},h}g))_{\ssy 0,D}
=(g,T_{{\ssy E},h}f)_{\ssy 0,D}\quad\forall\,f,g\in L^2(D),
\end{equation}
which, along with \eqref{Poincare}, yields
\begin{equation}\label{DELSTAB}
\|T_{{\ssy E},h}f\|_{\ssy 1,D}\leq\,C\, \|f\|_{\ssy -1, D}
\quad\forall\,f\in L^2(D).
\end{equation}
Due to the approximation property \eqref{pin0},
the theory of the standard Galerkin finite
element method for second order elliptic problems (cf., e.g.,
\cite{Cia}, \cite{BrScott}), yields that
\begin{equation}\label{pin1}
\begin{split}
\|T_{\ssy E}f-T_{{\ssy E},h}f\|_{\ssy 0,D}
\leq&\,C\,h^2\,\|T_{\ssy E}f\|_{\ssy 2,D}\\
\leq&\,C\,h^2\,\|f\|_{\ssy 0,D},\quad\forall\,f\in
L^2(D).\\
\end{split}
\end{equation}
%
%
\section{Estimating the Modeling Error}\label{Section3}
Here, we derive an  $L^{\infty}_t(L^2_{\ssy P}(L^2_x))-$estimate of the modeling error
in terms of $\dt$ and $\dx$ (cf.  \cite{ANZ}, \cite{BinLi}, \cite{KZ2008}, \cite{KZ2010}).
%
%
\begin{theorem}\label{model_theorem}
Let $u$ be the mild solution to \eqref{PARAP},
${\widehat u}$ be the solution to \eqref{AC2}
and ${\mathcal Z}(t):=\left({\mathbb E}\,\left[
\|u(t,\cdot)-{\widehat u}(t,\cdot)\|_{\ssy 0,D}^2\right]
\right)^{\ssy 1/2}$ for $t\in[0,T]$.
Then, there exist a constant
$C_{\ssy\rm ME}>0$, independent of $\dt$ and $\dx$, such that
\begin{equation}\label{ModelError}
\max_{\ssy t\in[0,T]}{\mathcal Z}(t)\leq\,C_{\ssy\rm ME}\,\left(\,\dt^\frac{1}{4}
+\epsilon^{-\frac{1}{2}}\,\dx^{\frac{1}{2}-\epsilon}\,\right)
\quad\forall\,\epsilon\in\left(0,\tfrac{1}{2}\right].
\end{equation}
\end{theorem}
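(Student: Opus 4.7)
The plan is to recast the modeling error as a single Walsh stochastic integral with deterministic kernel, apply the It\^o isometry, and then estimate the resulting $L^2((0,T)\times D)$-projection error using the spectral representation of the Green kernel. Fixing $(t,x)\in[0,T]\times\overline{D}$ and setting $g_{t,x}(s,y):=G_{t-s}(x,y)\,{\mathcal X}_{\ssy[0,t)}(s)$, Lemma~\ref{Lhmma1} applied to $g_{t,x}$ together with \eqref{HatUform} gives $\widehat{u}(t,x)=\int_0^T\!\!\int_D{\sf\Pi}g_{t,x}\,dW$. Subtracting from \eqref{MildSol}, using \eqref{Ito_Isom} pointwise in $x$, and integrating over $D$ yields
\begin{equation*}
{\mathcal Z}(t)^2=\int_D\int_0^T\int_D\bigl|(I-{\sf\Pi})g_{t,x}(s,y)\bigr|^2\,dy\,ds\,dx.
\end{equation*}
Writing ${\sf\Pi}={\sf\Pi}_t\otimes{\sf\Pi}_x$ as the tensor product of the one-dimensional $L^2$-projections in time and in space, the identity $I-{\sf\Pi}=(I-{\sf\Pi}_x)+{\sf\Pi}_x(I-{\sf\Pi}_t)$ together with the $L^2$-contractivity of ${\sf\Pi}_x$ yields ${\mathcal Z}(t)^2\le 2\,{\mathcal A}+2\,{\mathcal B}$, where ${\mathcal A}$ is a pure space error and ${\mathcal B}$ a pure time error.

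For ${\mathcal A}$ the spectral expansion \eqref{GreenKernel} combined with the orthonormality of $(\varepsilon_k)_{k\in\nset}$ in $L^2(D)$ collapses the double spatial integral to
\begin{equation*}
{\mathcal A}=\int_0^t\sum_{k=1}^{\infty} e^{-2\lambda_k^2 s}\,\|(I-{\sf\Pi}_x)\varepsilon_k\|_{\ssy 0,D}^2\,ds\le\sum_{k=1}^{\infty}\frac{\|(I-{\sf\Pi}_x)\varepsilon_k\|_{\ssy 0,D}^2}{2\,\lambda_k^2}.
\end{equation*}
The $L^2$-projection error satisfies the trivial bound $\|(I-{\sf\Pi}_x)\varepsilon_k\|_{\ssy 0,D}\le 1$ and the Poincar\'e-type bound $\|(I-{\sf\Pi}_x)\varepsilon_k\|_{\ssy 0,D}\le C\,\dx\,\lambda_k$, and hence $\|(I-{\sf\Pi}_x)\varepsilon_k\|_{\ssy 0,D}\le C\,(\dx\,\lambda_k)^{\frac{1}{2}-\epsilon}$ for every $\epsilon\in(0,\tfrac12]$. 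Substituting and invoking \eqref{SR_BOUND} with $c_\star=1$, $\nu=2\epsilon$ produces ${\mathcal A}\le C\,\epsilon^{-1}\,\dx^{1-2\epsilon}$.

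The time part ${\mathcal B}$ is the main obstacle because $G_{t-s}$ is singular as $s\uparrow t$, so a naive Poincar\'e estimate on every $T_n$ diverges. My remedy is to cut $(0,t)$ at $s_\star:=\max\{0,t-2\,\dt\}$. On every $T_n\subset(0,s_\star)$ I apply the one-dimensional Poincar\'e inequality to $s\mapsto G_{t-s}(x,y)$, integrate in $(x,y)$, and use the spectral bound $\|\partial_\tau G_\tau\|_{\ssy\rm HS}^2\le C\,\tau^{-5/2}$ to control the contribution by $C\,\dt^2\int_{2\dt}^T\tau^{-5/2}\,d\tau\le C\,\dt^{1/2}$. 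On the $O(1)$ remaining intervals that meet $[s_\star,t]$, including the one where $g_{t,x}$ jumps at $s=t$, I use $|h-\overline{h}|^2\le 2|h|^2+2|\overline{h}|^2$ together with $\|G_\tau\|_{\ssy\rm HS}^2\le C\,\tau^{-1/2}$ to bound the contribution by $C\int_0^{3\dt}\tau^{-1/2}\,d\tau\le C\,\dt^{1/2}$. Summing gives ${\mathcal B}\le C\,\dt^{1/2}$. Combining the estimates of ${\mathcal A}$ and ${\mathcal B}$ and taking square roots produces \eqref{ModelError}. The balancing in ${\mathcal B}$, where both the trivial and the Poincar\'e regions are forced to yield the same rate $\dt^{1/2}$, is precisely what drives the slow $\dt^{1/4}$ modeling rate and matches the regularity expected for the mild solution $u$.
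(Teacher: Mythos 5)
Your proof is correct. The skeleton coincides with the paper's: both reduce ${\mathcal Z}(t)^2$ via the It{\^o} isometry to the $L^2((0,T)\times D)$ distance between $g_{t,x}$ and ${\sf\Pi}g_{t,x}$, and both split $I-{\sf\Pi}=(I-{\sf\Pi}_x)+{\sf\Pi}_x(I-{\sf\Pi}_t)$ (the paper's $\Psi^1_{t,x}$ and $\Psi^2_{t,x}$ are exactly these two pieces). Your spatial estimate is essentially the paper's: the interpolation $\|(I-{\sf\Pi}_x)\varepsilon_k\|_{\ssy 0,D}\leq C\min\{1,\dx\,\lambda_k\}\leq C(\dx\,\lambda_k)^{\frac12-\epsilon}$ is the same device as the paper's bound $\sup_{y,y'\in D_j}|\varepsilon_k(y)-\varepsilon_k(y')|^2\leq 2\min\{1,\lambda_k^2\dx^2\}^{\gamma}$ with $\gamma=\tfrac12-\epsilon$, followed by \eqref{SR_BOUND}. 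Where you genuinely diverge is the temporal term: the paper stays in the spectral picture, computes $\Psi_n^k$ mode by mode (treating the interval containing $t$ separately), and is led to the explicit series $\sum_k(1-e^{-\lambda_k^2\dt})^2/\lambda_k^2$ and $\dt^{-1}\sum_k(1-e^{-\lambda_k^2\dt})^2/\lambda_k^4$, each estimated by $C\sqrt{\dt}$ through integral comparison. You instead work in the time domain: a cutoff at $t-2\dt$, the elementwise Poincar{\'e} inequality combined with the Hilbert--Schmidt decay $\|\partial_\tau G_\tau\|_{\ssy\rm HS}^2\leq C\,\tau^{-5/2}$ away from the singularity, and the crude bound with $\|G_\tau\|_{\ssy\rm HS}^2\leq C\,\tau^{-1/2}$ on the $O(1)$ intervals near $s=t$. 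Both routes yield ${\mathcal Z}_2(t)\leq C\dt^{1/4}$; yours is shorter and makes the origin of the rate transparent (the balance of the two regimes at the kernel singularity), while the paper's explicit per-mode computation is self-contained and avoids having to justify the sum-versus-integral comparisons behind the $\tau^{-1/2}$ and $\tau^{-5/2}$ decay bounds. The only points to spell out in a final write-up are these two HS-decay estimates and the observation that at most three partition intervals meet $[t-2\dt,\,t]$; neither presents any difficulty.
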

%
%
%
%
%
%
\begin{proof} The proof is moving along the lines of the proof of
Theorem~3.1 in \cite{KZ2010}. To simplify the notation we set
$S_{n,j}:=T_n\times D_j$ for $n=1,\dots,\Nstar$ and $j=1,\dots,\Jstar$.
Also, we will use the 
symbol $C$ to denote a generic constant that is independent of
$\dt$ and $\dx$, and may changes value from the one line to the other.
\par
Let $t\in(0,T]$. Using \eqref{MildSol}, \eqref{HatUform}, Lemma~\ref{Lhmma1} 
and It{\^o} isometry, we conclude that
%
%
\begin{equation*}
{\mathcal Z}(t)=\left(\int_0^{\ssy T}\left(\,
\int_{\ssy D}\int_{\ssy D}|\Psi_{t,x}(s,y)|^2\,dxdy\,\right)\,ds\right)^{\ssy 1/2},
\end{equation*}
where $\Psi_{t,x}(s,y):=g_{t,x}(s,y)-{\sf\Pi}g_{t,x}(s,y)$ and
$g_{t,x}(s,y):={\mathcal X}_{\ssy(0,t)}(s)\,G_{t-s}(x,y)$.
Now, we introduce the following splitting
\begin{equation}\label{arokaria0}
{\mathcal Z}(t) \leq\,{\mathcal Z}_{1}(t)+{\mathcal Z}_{2}(t)
\end{equation}
where
${\mathcal Z}_{\ell}(t):=\left(\,\int_0^{\ssy T}\left(
\int_{\ssy D}\int_{\ssy D}|\Psi_{t,x}^{\ell}(s,y)|^2\,dxdy
\right)\,ds\right)^{1/2}$ for $\ell=1,2$, and
\begin{equation*}
\begin{split}
\Psi_{t,x}^1(s,y):=&\,\tfrac{1}{\dt\,\dx}
\int_{\ssy S_{n,j}}{\mathcal X}_{(0,t)}(s)\,
\left[G_{t-s}(x,y)-G_{t-s}(x,y')\right]\,dy'ds',\\
\Psi_{t,x}^2(s,y):=&\,\tfrac{1}{\dt\,\dx}
\int_{\ssy S_{n,j}}
\left[{\mathcal X}_{(0,t)}(s)\,G_{t-s}(x,y')
-{\mathcal X}_{(0,t)}(s')G_{t-s'}(x,y')\right]\,dy'ds'\\
\end{split}
\end{equation*}
for $(s,y)\in S_{n,j}$ and for $n=1,\dots,\Nstar$ and $j=1,\dots,\Jstar$.
%
%
%
\par\noindent
\vskip0.3truecm
\par
{\tt Estimation of ${\mathcal Z}_{\ssy 1}(t)$}:  Using \eqref{GreenKernel}
and the $L^2(D)-$ orthogonality of $(\varepsilon_k)_{k=1}^{\ssy\infty}$
we have
\begin{equation*}
\begin{split}
{\mathcal Z}^2_{1}(t)=&\,\tfrac{1}{\dx^2}
\sum_{n=1}^{\ssy\Nstar}\sum_{j=1}^{\ssy\Jstar}
\int_{\ssy D}\left[\int_{\ssy S_{n,j}}
\left(\int_{\ssy D_j} {\mathcal X}_{(0,t)}(s)
\,\left[G_{t-s}(x,y)-G_{t-s}(x,y')\right]\,dy'\right)^2\,dyds
\right]\,dx\\
=&\,\tfrac{1}{\dx^2} \sum_{n=1}^{\ssy\Nstar}\sum_{j=1}^{\ssy\Jstar}
\int_{\ssy S_{n,j}}{\mathcal X}_{(0,t)}(s)\left(
\sum_{k=1}^{\infty}e^{-2\lambda_k^2(t-s)}
\,\left(\int_{\ssy D_j}(\varepsilon_k(y)-\varepsilon_k(y'))\,dy'\right)^2\right)\,dyds\\
=&\,\tfrac{1}{\dx^2}\sum_{k=1}^{\infty}
\left(\int_0^t
e^{-2\lambda_k^2(t-s)}\,ds\right)\,
\left(\sum_{j=1}^{\ssy\Jstar}\int_{\ssy D_j}\left(
\int_{\ssy D_j}(\varepsilon_k(y)-\varepsilon_k(y'))\,dy'\right)^2\,dy\right),\\
\end{split}
\end{equation*}
which, along with the Cauchy-Schwarz inequality, yields
\begin{equation}\label{arokaria1}
{\mathcal Z}^2_1(t)\leq \sum_{k=1}^{\infty}
\left(\int_0^te^{-2\lambda_k^2(t-s)}\,ds\right)
\,
\left(\tfrac{1}{\dx}\sum_{j=1}^{\ssy\Jstar}
\int_{\ssy D_j\times D_j}\left(\varepsilon_k(y)-\varepsilon_k(y')\right)^2\,dy'dy\right).
\end{equation}
\par
Let $k\in{\mathbb N}$ and $j\in\{\,1,\dots,\Jstar\,\}$. Using
the mean value theorem we have
\begin{equation}\label{arokaria2}
\begin{split}
\sup_{y,y'\in D_j}\big|\varepsilon_k(y)-\varepsilon_k(y')\big|^2
&\leq\,2\,\min\left\{1,\lambda_k^2\,\dx^2\right\}\\
&\leq\,2\,\min\left\{1,\lambda_k^2\,\dx^2\right\}^{\gamma}\\
&\leq\,2\,\lambda_k^{2\gamma}\,\dx^{2\gamma}
\quad\forall\,\gamma\in[0,1], \\
\end{split}
\end{equation}
and
\begin{equation}\label{arokaria3}
\begin{split}
\int_0^te^{-2\lambda_k^2(t-s)}\,ds &=\,\tfrac{1}{2\,\lambda_k^2}\,
\big(1-e^{-2\lambda_k^2t}\big)\\
&\leq\,\tfrac{1}{2\,\lambda_k^2}\cdot\\
\end{split}
\end{equation}
%
%
%
%
\par
Let $\gamma\in\left[0,\frac{1}{2}\right)$ and
$\epsilon=\frac{1}{2}-\gamma\in\left(0,\frac{1}{2}\right]$.
We combine \eqref{arokaria0}, \eqref{arokaria1},
\eqref{arokaria2} and \eqref{SR_BOUND} to obtain
\begin{equation}\label{arokaria4}
\begin{split}
{\mathcal Z}_1(t)\leq&\,\tfrac{\dx^{\gamma}}{\pi^{(1-\gamma)}}
\left(\,\sum_{k=1}^{\infty}\tfrac{1}{k^{1+2(\frac{1}{2}-\gamma)}}\,\right)^{\ssy 1/2}\\
\leq&\,C\,\dx^{\frac{1}{2}-\epsilon}\,\epsilon^{-\frac{1}{2}}.\\
\end{split}
\end{equation}
\par\noindent
\vskip0.3truecm
\par
{\tt Estimation of ${\mathcal Z}_2(t)$}:
Using again \eqref{GreenKernel} and the $L^2(D)-$ orthogonality of $(\varepsilon_k)_{k=1}^{\ssy\infty}$
we have
%
%
\begin{equation*}
{\mathcal Z}_2(t)=\left(\,\sum_{k=1}^{\ssy\infty} \Upsilon_1^k\,\Upsilon_2^k\,\right)^{\ssy 1/2},
\end{equation*}
where
\begin{equation*}
\begin{split}
\Upsilon^k_1:=&\,\tfrac{1}{\dx}\,\sum_{j=1}^{\ssy\Jstar}\left(
\int_{\ssy D_j}\varepsilon_k(y')\,dy'\right)^2,\\
\Upsilon^k_2:=&\,\tfrac{1}{\dt^2}\,\sum_{n=1}^{\ssy\Nstar}\int_{\ssy T_n}
\left(\,\int_{\ssy T_n}\left[{\mathcal X}_{(0,t)}(s)\,e^{-\lambda_k^2(t-s)} -{\mathcal
X}_{(0,t)}(s')\,e^{-\lambda_k^2(t-s')}\right]\,ds'\,\right)^2\,ds.\\
\end{split}
\end{equation*}
Observing that
\begin{equation*}
\begin{split}
\Upsilon_1^k\leq&\,\sum_{j=1}^{\ssy\Jstar}\int_{\ssy D_j}\varepsilon_k^2(y')\;dy'\\
\leq&\,\int_{\ssy D}\varepsilon_k^2(y')\;dy'\\
\leq&\,1,\quad\forall\,k\in{\mathbb N},
\end{split}
\end{equation*}
and letting $N(t)\in\left\{1,\dots,\Nstar\right\}$ such that $t\in\left(t_{\ssy N(t)-1},t_{\ssy N(t)}\right]$
we conclude that
\begin{equation}\label{arodamos1}
{\mathcal Z}_2(t)\leq\left(\,
\sum_{k=1}^{\ssy\infty}\sum_{n=1}^{\ssy N(t)}\Psi_n^k
\,\right)^{\ssy 1/2},
\end{equation}
where
\begin{equation*}
\Psi_n^k:=\tfrac{1}{\dt^2} \int_{\ssy T_n}
\left(\,\int_{\ssy T_n}\left({\mathcal X}_{(0,t)}(s)
\,e^{-\lambda_k^2(t-s)} -{\mathcal
X}_{(0,t)}(s')\,e^{-\lambda_k^2(t-s')}\right)\,ds'\,\right)^2\,ds.
\end{equation*}
\par
Let $k\in{\mathbb N}$ and $n\in\{1,\dots,N(t)-1\}$. Then, we have
\begin{equation*}
\begin{split}
\Psi_n^k=&\,\tfrac{1}{\dt^2}\, \int_{\ssy T_n}
\left(\int_{\ssy T_n}\left(\int_s^{s'} \lambda_k^2\,
e^{-\lambda_k^2(t-\tau)}\,d\tau\right)\,ds'\right)^2\,ds\\
\leq&\,\tfrac{1}{\dt^2}\, \int_{\ssy T_n}\left(\int_{\ssy T_n}
\left(\int_{\ssy t_{n-1}}^{\max\{s',s\}} \lambda_k^2\,
e^{-\lambda_k^2(t-\tau)}\,d\tau\right)\,ds'\right)^2\,ds\\
\leq&\,\tfrac{2}{\dt^2}\, \int_{\ssy T_n}\left(
\int_{\ssy T_n}\left(\int_{t_{n-1}}^{s'} \lambda_k^2\,
e^{-\lambda_k^2(t-\tau)}\,d\tau\right)\,ds'\right)^2\,ds\\
&\,\quad+\tfrac{2}{\dt^2}\, \int_{\ssy T_n}\left(
\int_{\ssy T_n}\left(\int_{t_{n-1}}^{s}\lambda_k^2\,
e^{-\lambda_k^2(t-\tau)}\,d\tau\right)\,ds'\right)^2\,ds,\\
%
\end{split}
\end{equation*}
from which, using the Cauchy-Schwarz inequility, we obtain
\begin{equation*}
\begin{split}
\Psi_n^k\leq&\,4\,\int_{\ssy T_n}\left(\int_{\ssy t_{n-1}}^{s}
\lambda_k^2\,e^{-\lambda_k^2\,(t-\tau)}\,d\tau\right)^2\,ds\\
&\leq\,4\,\int_{\ssy T_n}
\left(e^{-\lambda_k^2(t-s)}-e^{-\lambda_k^2\,(t-t_{n-1})}\right)^2\,ds\\
&\leq\,4\, \int_{\ssy T_n}e^{-2\lambda_k^2\,(t-s)}
\left(1-e^{-\lambda_k^2\,(s-t_{n-1})}\right)^2\,ds\\
&\leq\,4\,\left(1-e^{-\lambda_k^2\,\dt}\right)^2
\int_{\ssy T_n}e^{-2\lambda_k^2\,(t-s)}\,ds\\
&\leq\,2\,\left(1-e^{-\lambda_k^2\,\dt}\right)^2
\,\tfrac{e^{-\lambda_k^2\,(t-t_n)}
-e^{-\lambda_k^2\,(t-t_{n-1})}}{\lambda_k^2}\cdot\\
\end{split}
\end{equation*}
Thus, by summing with respect to $n$, we obtain
\begin{equation}\label{arodamos2}
\sum_{n=1}^{\ssy N(t)-1}\Psi_n^k\leq\,2\,
\tfrac{\big(1-e^{-\lambda_k^2\,\dt}\big)^2}{\lambda_k^2}\cdot
\end{equation}
Also, we have
\begin{equation*}
\begin{split}
%
\Psi_{\ssy N(t)}^k&= \tfrac{1}{\dt^2} \int_{t_{N(t)-1}}^t
\left(\int_{t_{N(t)-1}}^t\left(
\int_{s'}^s\lambda_k^2\,e^{-\lambda_k^2\,(t-\tau)}\,d\tau\right)\,ds'
+\int_t^{t_{N(t)}} e^{-\lambda_k^2\,(t-s)}\,ds'
\right)^2\,ds\\
&\hskip2.0truecm
+\tfrac{1}{\dt^2}\int_t^{t_{N(t)}}\left(
\int_{t_{N(t)}-1}^t e^{-\lambda_k^2\,(t-s')}\,ds'\right)^2\,ds\\
%
%
&\leq \tfrac{1}{\dt^2} \int_{t_{N(t)-1}}^t
\left(\int_{t_{N(t)-1}}^t
\left(\int_{s'}^s\lambda_k^2\,e^{-\lambda_k^2\,(t-\tau)}\,d\tau\right)\,ds'
+\dt\,e^{-\lambda_k^2\,(t-s)}\right)^2\,ds\\
&\hskip2.0truecm +\tfrac{1}{\dt}\,
\tfrac{\big(1-e^{-\lambda_k^2\,(t-t_{N(t)-1})}\big)^2}{\lambda_k^4}\\
%
%
&\leq \tfrac{2}{\dt^2} \int_{t_{N(t)-1}}^t \left(
\int_{t_{N(t)-1}}^t
\left(\int_{s'}^s\lambda_k^2\,e^{-\lambda_k^2\,(t-\tau)}\,d\tau\right)\,ds'
\right)^2\,ds \\
&\hskip1.0truecm
+\tfrac{\big(1-e^{-2\lambda_k^2\,(t-t_{N(t)-1})}\big)^2}{\lambda_k^2}
+\tfrac{1}{\dt}\,
\tfrac{\big(1-e^{-\lambda_k^2\,(t-t_{N(t)-1})}\big)^2}{\lambda_k^4}\\
%
%
&\leq\tfrac{2}{\dt^2} \int_{t_{N(t)-1}}^t
\left(\int_{t_{N(t)-1}}^t
\left(\int_{t_{N(t)-1}}^{\max\{s,s'\}}\lambda_k^2
\,e^{-\lambda_k^2\,(t-\tau)}\,d\tau\right)\,ds'
\right)^2\,ds\\
&\hskip1.0truecm
+\tfrac{\big(1-e^{-2\lambda_k^2\,\dt}\big)^2}{\lambda_k^2}
+\tfrac{1}{\dt}\,
\tfrac{\big(1-e^{-\lambda_k^2\,\dt}\big)^2}{\lambda_k^4}\\
%
%
&\leq \,8\, \int_{t_{N(t)-1}}^t
\left(\int_{t_{N(t)-1}}^s\lambda_k^2\,e^{-\lambda_k^2\,(t-\tau)}\,d\tau\right)^2\,ds
+\tfrac{\big(1-e^{-2\lambda_k^2\,\dt}\big)^2}{\lambda_k^2}
+\tfrac{1}{\dt}\,
\tfrac{\big(1-e^{-\lambda_k^2\,\dt}\big)^2}{\lambda_k^4}\\
%
&\leq \,8\, \int_{t_{N(t)-1}}^t
\left(e^{-\lambda_k^2(t-s)}-e^{-\lambda_k^2(t-t_{N(t)-1})}\right)^2\,ds
+\tfrac{\big(1-e^{-2\lambda_k^2\,\dt}\big)^2}{\lambda_k^2}
+\tfrac{1}{\dt}\,
\tfrac{\big(1-e^{-\lambda_k^2\,\dt}\big)^2}{\lambda_k^4},\\
\end{split}
\end{equation*}
which finally gives
\begin{equation}\label{arodamos3}
\Psi_{\ssy N(t)}^k\leq
\,5\,\tfrac{\big(1-e^{-2\lambda_k^2\,\dt}\big)^2}{\lambda_k^2}
+\tfrac{1}{\dt}\,
\tfrac{\big(1-e^{-\lambda_k^2\,\dt}\big)^2}{\lambda_k^4}\cdot
\end{equation}
\par
Combining \eqref{arodamos1}, \eqref{arodamos2} and \eqref{arodamos3} we obtain
\begin{equation}\label{arodamos4}
{\mathcal Z}_2(t)\leq\,\left(
\,7\,\sum_{k=1}^{\ssy\infty}\tfrac{\big(1-e^{-2\lambda_k^2\,\dt}\big)^2}{\lambda_k^2}
+\tfrac{1}{\dt}\,\sum_{k=1}^{\ssy\infty}
\tfrac{\big(1-e^{-\lambda_k^2\,\dt}\big)^2}{\lambda_k^4}\,\right)^{\ssy 1/2}\cdot
\end{equation}
The last step in the proof is to bound the series above in terms of $\dt$.
For the first series, we proceed as follows
\begin{equation*}
\begin{split}
\sum_{k=1}^{\ssy\infty}
\tfrac{\big(\,1-e^{-2\,\lambda_k^2\,\dt}\,\big)^2}{\lambda_k^2}
\leq&\, \tfrac{\big(\,1-e^{-2\,\pi^2\,\dt}\,\big)^2}{\pi^2}
+\int_1^{\ssy\infty} \tfrac{\big(\,1-e^{-2\,x^2\,\pi^2\,\dt}\,\big)^2}
{x^2\,\pi^2}\,dx\\
\leq&\,\tfrac{2}{\pi^2}\,
\big(\,1-e^{-2\,\pi^2\,\dt}\,\big)^2
+8\,\dt\,\int_1^{\infty}
(\,1-e^{-2\,x^2\,\pi^2\,\dt}\,)
\,e^{-2\,x^2\,\pi^2\,\dt}\,dx\\
\leq&\,\tfrac{2}{\pi^2}\,\left(\int_0^1
\big(-e^{-2\,x\,\pi^2\,\dt}\big)' \,dx\right)^2
+\tfrac{8}{\pi}\,\sqrt{\dt}\,
\int_{\pi\,\ssy\sqrt{\dt}}^{\ssy\infty} e^{-2y^2}\,dy\\
\leq&\,\tfrac{2}{\pi^2}\, \left(\,2\,\pi^2\,\dt\,
\int_0^1 e^{-2\,x\,\pi^2\,\dt}\,dx\,\right)^2
+\tfrac{8}{\pi}\,\sqrt{\dt}\,
\left(\,\int_0^1e^{-2y^2}\,dy+\int_1^{\ssy\infty}e^{-2y^2}\,dy\,\right)\\
\leq&\,8\,\pi^2\,\dt^2
+\tfrac{8}{\pi}\,\sqrt{\dt}\,
\left(1+\tfrac{1}{2}\,\int_1^{\ssy\infty}\tfrac{1}{y^2}\,dy\right)\\
\end{split}
\end{equation*}
from which we obtain
\begin{equation}\label{arodamos5}
\sum_{k=1}^{\ssy\infty} \tfrac{\big(\,1-e^{-2\lambda_k^2\,\dt}\,\big)^2}
{\lambda_k^2}\leq\,C\,\sqrt{\dt}.
\end{equation}
We treat the second series, in a similar manner, as follows
\begin{equation*}
\begin{split}
\sum_{k=1}^{\ssy\infty} \tfrac{\big(\,1-e^{-\lambda_k^2\,\dt}\,\big)^2}
{\lambda_k^4}
\leq&\,\tfrac{\big(\,1-e^{-\pi^2\,\dt}\,\big)^2}{\pi^4}
+\int_1^{\ssy\infty} \tfrac{\big(\,1-e^{-x^2\,\pi^2\,\dt}\,\big)^2}
{x^4\,\pi^4}\,dx\\
\leq&\,\tfrac{4}{3\,\pi^4}\,
\big(\,1-e^{-\pi^2\,\dt}\,\big)^2
+\tfrac{4\,\dt}{3\,\pi^2}\,\int_1^{\ssy\infty}
\tfrac{\big(\,1-e^{-x^2\,\pi^2\,\dt}\,\big)
\,e^{-x^2\,\pi^2\,\dt}}{x^2}\,dx\\
\leq&\,\tfrac{4}{3\,\pi^4}\,\left(\int_0^1
\big(e^{-x\,\pi^2\,\dt}\big)' \,dx\right)^2
+\tfrac{4}{3\,\pi^2}\,\dt\,\int_1^{\ssy\infty}
\tfrac{1-e^{-x^2\,\pi^2\,\dt}}{x^2}\,dx\\
\leq&\,\tfrac{4}{3\,\pi^4}\,\left(\pi^2\,\dt\,
\int_0^1e^{-x\,\pi^2\,\dt}\,dx\right)^2\\
&\quad+\tfrac{4}{3\,\pi^2}\,\dt\,\left(1-e^{-\pi^2\,\dt}
+2\,\pi^2\,\dt\,\int_1^{\ssy\infty}e^{-x^2\,\pi^2\,\dt}\,dx\right)\\
\leq&\,\tfrac{8}{3}\,\dt^2
+\tfrac{8}{3}\,\dt^{\frac{3}{2}}
\,\int_{\pi\,\ssy\sqrt{\dt}}^{\ssy\infty}e^{-y^2}\,dy\\
\leq&\,\tfrac{8}{3}\,\dt^2
+\tfrac{8}{3}\,\dt^{\frac{3}{2}}
\,\left(\int_0^1e^{-y^2}\,dy+\int_1^{\ssy\infty}e^{-y^2}\,dy\right)\\
\leq&\,\tfrac{8}{3}\,\dt^2
+\tfrac{8}{3}\,\dt^{\frac{3}{2}}
\,\left(1+\int_1^{\infty}\tfrac{1}{y^2}\,dy\right),\\
\end{split}
\end{equation*}
from which we arrive at
\begin{equation}\label{arodamos6}
\tfrac{1}{\dt}\,\sum_{k=1}^{\ssy\infty}
\tfrac{\big(\,1-e^{-\lambda_k^2\,\dt}\,\big)^2}{\lambda_k^4}
\leq\,C\,\sqrt{\dt}.
\end{equation}
Using the bounds \eqref{arodamos4}, \eqref{arodamos5} and \eqref{arodamos6}
we arrive at
\begin{equation}\label{arodamos7}
{\mathcal Z}_2(t)\leq\,C\,\dt^{\frac{1}{4}}.
\end{equation}
\par
Since ${\mathcal Z}(0)=0$, the error bound \eqref{ModelError} follows easily from
\eqref{arokaria0}, \eqref{arokaria4} and \eqref{arodamos7}.
\end{proof}
\begin{remark}
In \cite{ANZ} and \cite{BinLi} is obtained an $O(\dt^{\frac{1}{4}}+\dx\,\dt^{-\frac{1}{4}})$
a priori estimate of the modelling error measured in the $L^2_{\ssy P}(L^2_t(L^2_x))$ norm, which
introduces the need to assume a CFL condition in order to conclude a rate of convergence,
when $\dt,\dx\rightarrow 0$.
The $L^{\ssy\infty}_t(L^2_{\ssy P}(L^2_x))-$modelling error estimate derived in
Theorem~\ref{model_theorem} is valid, without requiring any mesh condition
between $\dt$ and $\dx$.
\end{remark}
%
%
%
%
%
\section{A Modified Crank-Nicolson Method for the Deterministic Problem}\label{Section4}
%
%
Following \cite{Zouraris2017}, we introduce and analyze modified Crank-Nicolson
time-discrete and fully-discrete approximations of the solution to the deterministic
problem \eqref{Det_Parab}, which are necessary to carry out the convergence
analysis of the Crank-Nicolson finite element method defined in
Section~\ref{kefalaiaki}.
%
%
\subsection{Time-Discrete approximations}\label{Section41}
%
%
The modified Crank-Nicolson time-discrete approximations
of the solution $v$ to \eqref{Det_Parab} follow, first, by setting
\begin{equation}\label{CNDet1}
\VV^0:=v_0
\end{equation}
and by finding ${\VV}^1\in{\bfdot H}^2(D)$ such that
\begin{equation}\label{CNDet11}
\VV^1-\VV^0=\tfrac{\dtau}{2}\,\partial^2\VV^1,
\end{equation}
and then, for $m=2,\dots,M$, by specifying $\VV^m\in{\bfdot H}^2(D)$
satisfying
\begin{equation}\label{CNDet2}
\VV^m-\VV^{m-1}=\dtau\,\,\partial^2\VV^{m-\frac{1}{2}}.
\end{equation}
\par
The first convergence result we provide, is a discrete in time  $L^2_t(L^2_x)$ estimate
of time-averages of the nodal error.
%
%
%
\begin{proposition}\label{DetPropo1}
Let $(\VV^m)_{m=0}^{\ssy M}$ be the time-discrete approximations of the solution
$v$ to the problem \eqref{Det_Parab} defined by \eqref{CNDet1}--\eqref{CNDet2}.
Then, there exists a constant $C>0$, independent of $\dtau$, such that
\begin{equation}\label{Ydaspis900}
\left(\,\dtau\sum_{m=1}^{\ssy M}
\big\|\VV^{m-\frac{1}{2}}-v^{m-\frac{1}{2}}\big\|_{\ssy
0,D}^2 \,\right)^{\ssy 1/2} \leq\,C\,\dtau^{\theta}
\,\|v_0\|_{\ssy{\bfdot H}^{2\theta-1}}
\quad\forall\,\theta\in[0,1],\quad\forall\,v_0\in{\bfdot H}^1(D),
\end{equation}
where $v^{\ell}(\cdot):=v(\tau_{\ell},\cdot)$ for $\ell=0,\dots,M$.
\end{proposition}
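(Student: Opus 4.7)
The plan is to diagonalize in the $L^2$-orthonormal eigenbasis $\{\varepsilon_k\}_{k\ge 1}$ and reduce \eqref{Ydaspis900} to a mode-by-mode scalar estimate. Writing $v_0 = \sum_k c_k\,\varepsilon_k$ with $c_k := (v_0,\varepsilon_k)_{\ssy 0,D}$, the schemes \eqref{CNDet1}--\eqref{CNDet2} decouple in this basis. With $\mu_k := \tfrac{1}{2}\lambda_k^2\,\dtau$ and $r(\mu) := (1-\mu)/(1+\mu)$, a direct induction gives $(\VV^m,\varepsilon_k)_{\ssy 0,D} = c_k\,r(\mu_k)^{m-1}/(1+\mu_k)$ for every $m\ge 1$; the identity $\tfrac{1}{2}(1+r(\mu)) = 1/(1+\mu)$ then collapses the midpoint average to $(\VV^{m-\frac{1}{2}},\varepsilon_k)_{\ssy 0,D} = c_k\,r(\mu_k)^{m-2}/(1+\mu_k)^2$ for $m\ge 2$ and $(\VV^{\frac{1}{2}},\varepsilon_k)_{\ssy 0,D} = c_k(2+\mu_k)/(2(1+\mu_k))$, while the exact solution gives $(v^{m-\frac{1}{2}},\varepsilon_k)_{\ssy 0,D} = c_k\,e^{-(2m-1)\mu_k}\cosh\mu_k$.

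By Parseval, the square of the left-hand side of \eqref{Ydaspis900} equals $\sum_k c_k^2\,\dtau\sum_{m=1}^M (E_k^m)^2$, where $E_k^m$ is the $c_k$-independent scalar $E_k^m := r(\mu_k)^{m-2}/(1+\mu_k)^2 - e^{-(2m-1)\mu_k}\cosh\mu_k$ for $m\ge 2$ and $E_k^1 := \tfrac{1}{2(1+\mu_k)} - \tfrac{1}{2}e^{-2\mu_k}$; moreover, $\|v_0\|_{\ssy{\bfdot H}^{2\theta-1}}^2 = \sum_k c_k^2\,\lambda_k^{4\theta-2}$. Hence \eqref{Ydaspis900} follows from the mode-wise bound $\sum_{m=1}^M(E_k^m)^2 \le C\,\mu_k^{2\theta-1}$ for every $k\ge 1$ and every $\theta\in[0,1]$. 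Since $\mu_k^{2\theta-1} \ge \min\{\mu_k,\mu_k^{-1}\}$ whenever $\theta\in[0,1]$, it is enough to prove the unified estimate $\sum_{m=1}^M(E_k^m)^2 \le C\min\{\mu_k,\mu_k^{-1}\}$, which I would establish in two regimes.

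For $\mu_k\ge 1$, exploit $|r(\mu_k)|<1$ together with the geometric summation $\sum_{m\ge 2}r(\mu_k)^{2(m-2)} \le (1-r(\mu_k)^2)^{-1} = (1+\mu_k)^2/(4\mu_k)$ to obtain $\sum_{m\ge 2}(r(\mu_k)^{m-2}/(1+\mu_k)^2)^2 \le 1/(4\mu_k(1+\mu_k)^2) \le C/\mu_k^3$; the exact midpoints $e^{-(2m-1)\mu_k}\cosh\mu_k$ are exponentially small for $m\ge 2$ in this range; and $(E_k^1)^2\le C/\mu_k^2$ by direct inspection. The contributions sum to $\le C/\mu_k$. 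For $\mu_k\le 1$, use the Taylor expansions of $r(\mu)$, $(1+\mu)^{-2}$, and $e^{-\mu}\cosh\mu$ around $\mu=0$ to compare the discrete and exact modes, and extract the uniform estimate $|E_k^m| \le C\mu_k\,e^{-2(m-1)\mu_k}$ for $m\ge 2$; geometric summation then yields $\sum_{m\ge 2}(E_k^m)^2 \le C\mu_k^2/(1-e^{-4\mu_k}) \le C\mu_k$ (since $1-e^{-4\mu_k}\ge c\mu_k$ for $\mu_k\le 1$), while $(E_k^1)^2\le C\mu_k^2\le C\mu_k$.

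The hardest step is securing this unified bound uniformly in both regimes. In the large-$\mu_k$ (stiff-mode) regime the Backward-Euler-type first step \eqref{CNDet11} is essential: the extra $(1+\mu_k)^{-1}$ damping factor it introduces is what tames the high-frequency modes ($\lambda_k^2\dtau\gtrsim 1$) and allows $\theta$ to descend to $0$, i.e., accommodates data as rough as $v_0\in{\bfdot H}^{-1}(D)$---plain Crank-Nicolson, being only weakly stable, would not suffice. In the small-$\mu_k$ regime the delicate part is extracting the correct cancellation order uniformly in $m$, where the $(1+\mu_k)^{-2}$ prefactor of the discrete midpoint must align with the leading-order expansion of $\cosh\mu_k\,e^{-\mu_k}$; both ingredients are needed to interpolate across the full range $\theta\in[0,1]$.
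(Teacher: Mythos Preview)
Your spectral approach is correct and genuinely different from the paper's proof. The paper argues variationally: it writes an error equation in the form $T_{\ssy E}({\mathcal E}^m-{\mathcal E}^{m-1})=\dtau\,{\mathcal E}^{m-\frac{1}{2}}+\rho_m$, tests against ${\mathcal E}^{m-\frac{1}{2}}$, sums, and controls the consistency residuals $\rho_m$ via the parabolic smoothing estimates \eqref{Reggo0}--\eqref{Reggo3}; the first (Backward-Euler) step is handled by a separate energy argument. This yields \eqref{Ydaspis900} at the endpoints $\theta=1$ and $\theta=0$, and the full range is obtained by operator interpolation. Your route instead diagonalizes in $\{\varepsilon_k\}$, reduces everything to the scalar quantity $\sum_m(E_k^m)^2$, and proves the single bound $\sum_m(E_k^m)^2\le C\min\{\mu_k,\mu_k^{-1}\}$, which delivers every $\theta\in[0,1]$ at once without interpolation. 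Each approach has its advantages: the paper's energy argument is coefficient-robust and transfers directly to the finite element setting of Proposition~\ref{Aygo_Kokora}, whereas your mode-wise computation is more elementary here and makes the role of the stabilizing first step completely explicit through the extra $(1+\mu_k)^{-1}$ factor.

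One small point of care: your pointwise claim $|E_k^m|\le C\mu_k\,e^{-2(m-1)\mu_k}$ in the small-$\mu_k$ regime is slightly optimistic for very large $m$, because the telescoping bound $|r^{m-2}-b^{m-2}|\le (m-2)\,|r-b|\,b^{m-3}$ with $|r-b|=O(\mu_k^3)$ only gives $|E_k^m|\le C\mu_k^3(m-2)b^{m-3}$, and $\mu_k^2(m-2)$ need not be uniformly bounded. However, your \emph{summed} conclusion is fine: splitting $E_k^m$ as $(r^{m-2}-b^{m-2})/(1+\mu_k)^2$ plus $b^{m-2}\big[(1+\mu_k)^{-2}-\tfrac{b(1+b)}{2}\big]$ and using $\sum_j j^2 b^{2(j-1)}\le C(1-b^2)^{-3}\le C\mu_k^{-3}$ gives $\sum_{m\ge2}(E_k^m)^2\le C\mu_k^3+C\mu_k\le C\mu_k$, which is exactly what you need.
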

%
%
%
%
%
%
%
%
\begin{proof}
In the sequel, we will use the symbol $C$ to denote a generic
constant that is independent of $\dtau$
and may changes value from the one line to the other.
\par
Let  ${\mathcal E}^{\frac{1}{2}}:=v(\tau_{\frac{1}{2}},\cdot)-\VV^1$
and ${\mathcal E}^m:=v^m-\VV^m$ for
$m=0,\dots,M$.
Then, using \eqref{Det_Parab} and \eqref{CNDet2}, we conclude that
\begin{equation}\label{CityHall2010_3}
T_{\ssy E}({\mathcal E}^m-{\mathcal E}^{m-1})=\dtau\,{\mathcal E}^{m-\frac{1}{2}}
+\rho_m,\quad m=2,\dots,M,
\end{equation}
where
$\rho_m(\cdot):=\int_{\ssy\Delta_m}
[v(s,\cdot)-v^{m-\frac{1}{2}}(\cdot)]\,ds$.
Taking the $L^2(D)-$inner product of both sides of
\eqref{CityHall2010_3} with ${\mathcal E}^{m-\frac{1}{2}}$,
using \eqref{TB-prop1} and then summing
with respect to $m$ from $2$ up to $M$, we obtain
\begin{equation*}
|T_{\ssy E}{\mathcal E}^{\ssy M}|_{\ssy 1,D}^2
-|T_{\ssy E}{\mathcal E}^1|_{\ssy 1,D}^2
+2\,\dtau\,\sum_{m=2}^{\ssy M}\|{\mathcal E}^{m-\frac{1}{2}}\|_{\ssy 0,D}^2
=-2\,\sum_{m=2}^{\ssy M}(\rho_m,{\mathcal E}^{m-\frac{1}{2}})_{\ssy 0,D},
\end{equation*}
which, after applying the Cauchy-Schwarz inequality
and the geometric mean inequality, yields
%
%
\begin{equation}\label{May2015_3}
\dtau\,\sum_{m=2}^{\ssy M}\|{\mathcal E}^{m-\half}\|_{\ssy 0,D}^2
\leq\,|T_{\ssy E}{\mathcal E}^{1}|^2_{\ssy 1,D}
+\dtau^{-1}\,\sum_{m=2}^{\ssy M}\|\rho_m\|_{\ssy 0,D}^2.
\end{equation}
Next, we bound the residual functions $(\rho_m)_{m=2}^{\ssy M}$ as follows:
\begin{equation}\label{May2015_4}
\begin{split}
\|\rho_m\|_{\ssy 0,D}^2=&\,\tfrac{1}{4}\,\int_{\ssy D}
\left(-\int_{\ssy\Delta_m}\int_{\tau}^{\tau_m}
\partial_{\tau}v(s,x)\,ds{d\tau}
+\int_{\ssy\Delta_m}\int_{\tau_{m-1}}^{\tau}
\partial_{\tau}v(s,x)\,ds{d\tau}\right)^2\,dx\\
\leq&\,\int_{\ssy D}\left(\int_{\ssy\Delta_m}
\int_{\ssy\Delta_m}|\partial_{\tau}v(s,x)|\,ds
d\tau\right)^2\,dx\\
\leq&\,\dtau^3\,\int_{\ssy\Delta_m}
\|\partial_{\tau}v(s,\cdot)\|_{\ssy 0,D}^2\,ds,
\quad m=2,\dots,M.\\
\end{split}
\end{equation}
Also, observing that ${\mathcal E}^0=0$ and combining \eqref{May2015_3},
\eqref{May2015_4} and \eqref{Reggo3} (with $\beta=0$, $\ell=1$, $p=0$),
we obtain
\begin{equation}\label{May2015_5}
\begin{split}
\dtau\,\sum_{m=1}^{\ssy M}\|{\mathcal E}^{m-\frac{1}{2}}\|_{\ssy
0,D}^2\leq&\,\tfrac{\dtau}{4}\,\|{\mathcal E}^1\|_{\ssy 0,D}^2
+|T_{\ssy E}{\mathcal E}^{1}|^2_{\ssy 1,D}
+\dtau^2\,
\int_0^{\ssy T}\|\partial_{\tau}v(s,\cdot)\|_{\ssy 0,D}^2\,ds\\
\leq&\,\tfrac{\dtau}{4}\,\|{\mathcal E}^1\|_{\ssy 0,D}^2
+|T_{\ssy E}{\mathcal E}^{1}|^2_{\ssy 1,D}
+C\,\dtau^2\,\|v_0\|_{\ssy{\bfdot H}^1}^2.\\
\end{split}
\end{equation}
In order to bound the first two terms in the right hand side of
\eqref{May2015_5}, we introduce the following splittings
\begin{gather}
|T_{\ssy E}{\mathcal E}^1|_{\ssy 1,D}^2\leq\,2\,\left(\,
|T_{\ssy E}(v(\tau_1,\cdot)-v(\tau_{\half},\cdot))|^2_{\ssy 1,D}
+|T_{\ssy E}{\mathcal E}^{\frac{1}{2}}|^2_{\ssy 1,D}\,\right),
\label{May2015_6}\\
\dtau\,\|{\mathcal E}^1\|_{\ssy 0,D}^2\leq\,2\,\dtau\,\left(\,
\|v(\tau_1,\cdot)-v(\tau_{\half},\cdot)\|_{\ssy 0,D}^2
+\|{\mathcal E}^{\frac{1}{2}}\|_{\ssy 0,D}^2\,\right).\label{May2015_6a}
\end{gather}
We continue by estimating the terms in the right hand side of \eqref{May2015_6}
and \eqref{May2015_6a}. First, we observe that
%
%
$\|v(\tau_1,\cdot)-v(\tau_{\half},\cdot)\|_{\ssy 0,D}^2
\leq\tfrac{\dtau}{2}\,\int_{\tau_{\half}}^{\tau_1}
\|\partial_{\tau}v(\tau,\cdot)\|^2_{\ssy 0,D}\;d\tau$,
which, along with \eqref{Reggo3} (with $\ell=1$, $p=0$, $\beta=0$), yields
\begin{equation}\label{May2015_7}
\|v(\tau_1,\cdot)-v(\tau_{\half},\cdot)\|_{\ssy 0,D}^2
\leq\,C\,\dtau\,\|v_0\|^2_{\ssy{\bfdot H}^1}.
\end{equation}
Next, we use \eqref{ElReg1} and \eqref{minus_equiv}, to get
\begin{equation*}\label{May2015_8a}
\begin{split}
|T_{\ssy E}(v(\tau_1,\cdot)-v(\tau_{\half},\cdot))|_{\ssy 1,D}^2
\leq&\,\tfrac{\dtau}{2}\,\int_{\tau_{\half}}^{\tau_1}
|T_{\ssy E}\left(\partial_{\tau}v(\tau,\cdot)\right)|_{\ssy 1,D}^2\;d\tau\\
\leq&\,C\,\dtau\,\int_{\tau_{\half}}^{\tau_1}
\|\partial_{\tau}v(\tau,\cdot)\|_{\ssy -1,D}^2\;d\tau\\
\leq&\,C\,\dtau\,\int_{\tau_{\half}}^{\tau_1}
\|\partial_{\tau}v(\tau,\cdot)\|^2_{\ssy{\bfdot H}^{-1}}\;d\tau.\\
\end{split}
\end{equation*}
Observing that
$v(\tau,\cdot)=\sum_{i=1}^{\infty}e^{-\lambda_i^2\,\tau}
\,(v_0,\varepsilon_i)_{\ssy 0,D}\,\varepsilon_i(\cdot)$ for $\tau\in[0,T]$,
we obtain
\begin{equation*}\label{May2015_8b}
\begin{split}
\|\partial_{\tau}v(\tau,\cdot)\|_{\ssy{\bfdot H}^{-1}}^2
=&\,\sum_{i=1}^{\infty}\lambda_i^{-2}
\,\,|(\partial_{\tau}v(\tau,\cdot),\varepsilon_i)_{\ssy 0,D}|^2\\
=&\,\sum_{i=1}^{\infty}\lambda_i^2\,e^{-2\,\lambda_i^2\,\tau}
\,|(v_0,\varepsilon_i)_{\ssy 0,D}|^2\\
\leq&\,\|v_0\|_{\ssy{\bfdot H}^1}^2\quad\forall\,\tau\in[0,T].\\
\end{split}
\end{equation*}
Thus,  we arrive at
\begin{equation}\label{May2015_9}
|T_{\ssy E}(v(\tau_1,\cdot)-v(\tau_{\half},\cdot))|_{\ssy 1,D}^2
\leq\,C\,\dtau^2\,\|v_0\|_{\ssy{\bfdot H}^1}^2.
\end{equation}
Finally, using \eqref{Det_Parab} and \eqref{CNDet11} we have
\begin{equation}\label{May2015_10}
T_{\ssy E}({\mathcal E}^{\frac{1}{2}}-{\mathcal E}^0)
=\tfrac{\dtau}{2}\,{\mathcal E}^{\frac{1}{2}}+\rho_{\frac{1}{2}}
\end{equation}
with
$\rho_{\frac{1}{2}}(\cdot):=\int_{0}^{\tau_{\frac{1}{2}}}
[\,v(s,\cdot)-v(\tau_{\half},\cdot)]\;ds$.
Since ${\mathcal E}^0=0$, after taking the $L^2(D)-$inner product of both sides
of \eqref{May2015_10} with ${\mathcal E}^{\frac{1}{2}}$ and using \eqref{TB-prop1} and
the Cauchy-Schwarz inequality along with the arithmetic mean inequality, we obtain
\begin{equation}\label{May2015_12}
|T_{\ssy E}{\mathcal E}^{\frac{1}{2}}|_{\ssy 1,D}^2
+\tfrac{\dtau}{2}\,\|{\mathcal E}^{\frac{1}{2}}\|_{\ssy 0,D}^2
\leq\tfrac{1}{\dtau}\,\|\rho_{\frac{1}{2}}\|_{\ssy 0,D}^2
+\tfrac{\dtau}{4}\,\|{\mathcal E}^{\frac{1}{2}}\|_{\ssy 0,D}^2.
\end{equation}
Now, using  \eqref{Reggo3} (with $\beta=0$, $\ell=1$, $p=0$) we obtain
\begin{equation*}
\begin{split}
\|\rho_{\frac{1}{2}}\|_{\ssy 0,D}^2=&\,\int_{\ssy D}
\left[\,\int_0^{\tau_{\half}}\left(\int_s^{\tau_{\half}}\partial_{\tau}v(\tau,x)\;d\tau\right)\;ds
\,\right]^2\;dx\\
\leq&\,\tfrac{\dtau^3}{8}
\int_0^{\tau_1}\|\partial_{\tau}v(\tau,\cdot)\|^2_{\ssy 0,D}\;d\tau\\
\leq&\,C\,\dtau^3\,\|v_0\|_{\ssy{\bfdot H}^1}^2,\\
\end{split}
\end{equation*}
which, along with \eqref{May2015_12}, yields
\begin{equation}\label{May2015_13}
|T_{\ssy E}{\mathcal E}^{\frac{1}{2}}|_{\ssy 1,D}^2
+\tfrac{\dtau}{4}\,\|{\mathcal E}^{\frac{1}{2}}\|_{\ssy 0,D}^2
\leq\,C\,\dtau^2\,\|v_0\|^2_{\ssy{\bfdot H}^1}.
\end{equation}
Thus, from  \eqref{May2015_5}, \eqref{May2015_6}, \eqref{May2015_6a},
\eqref{May2015_7}, \eqref{May2015_9} and \eqref{May2015_13}, we conclude that
\eqref{Ydaspis900} holds for $\theta=1$.
\par
We continue, by observing that \eqref{CNDet11} and \eqref{CNDet2} are equivalent
to
\begin{gather}
T_{\ssy E}(\VV^1-\VV^{0})=\tfrac{\dtau}{2}\,\VV^{1},\label{May2015_20a}\\
T_{\ssy E}(\VV^m-\VV^{m-1})=\dtau\,\VV^{m-\frac{1}{2}},\quad
m=2,\dots,M.\label{May2015_20}
\end{gather}
Next, we take the $L^2(D)-$inner product of both sides of
\eqref{May2015_20} with $\VV^{m-\frac{1}{2}}$,
use \eqref{TB-prop1} and sum with respect to $m$ from
$2$ up to $M$, to obtain
%
%
%
\begin{equation}\label{May2015_21}
\dtau\,\|\VV^1\|_{\ssy 0,D}^2
+\dtau\,\sum_{m=2}^{\ssy M}\|\VV^{m-\frac{1}{2}}\|_{\ssy 0,D}^2\leq
\,\dtau\,\|\VV^1\|_{\ssy 0,D}^2+|T_{\ssy E}\VV^1|_{\ssy 1,D}^2.
\end{equation}
Now, we take the $L^2(D)-$inner product of both sides of
\eqref{May2015_20a} with $\VV^{1}$, use \eqref{TB-prop1} along
with \eqref{innerproduct} and \eqref{CNDet1} to get
\begin{equation}\label{May2015_22}
|T_{\ssy E}\VV^1|_{\ssy 1,D}^2+\dtau\,\|\VV^1\|_{\ssy 0,D}^2
\leq\,|T_{\ssy E}v^0|_{\ssy 1,D}^2.
\end{equation}
Combining \eqref{May2015_21} and \eqref{May2015_22} and then
using \eqref{ElReg1} and \eqref{minus_equiv}, we obtain
\begin{equation}\label{May2015_23}
\begin{split}
\dtau\,\|\VV^1\|_{\ssy 0,D}^2
+\dtau\,\sum_{m=2}^{\ssy M}\|\VV^{m-\frac{1}{2}}\|_{\ssy 0,D}^2
\leq&\,C\,\|v_0\|_{\ssy -1,D}^2\\
\leq&\,C\,\|v_0\|_{\ssy{{\bfdot H}^{-1}}}^2.\\
\end{split}
\end{equation}
In addition, we have
\begin{equation}\label{Museum_1}
\dtau\,\|v^1\|^2_{\ssy 0,D}+\dtau\sum_{m=2}^{\ssy M}\|v^{m-\half}\|_{\ssy 0,D}^2
\leq\,2\,\dtau\,\sum_{m=1}^{\ssy M}\|v^m\|_{\ssy 0,D}^2
\end{equation}
and
\begin{equation*}
\begin{split}
2\,\dtau\,\sum_{m=1}^{\ssy M}\|v^m\|_{\ssy 0,D}^2
\leq&\,2\,\dtau^{-1}\,\sum_{m=1}^{\ssy M}\int_{\ssy D}
\left(\,\int_{\tau_{m-1}}^{\tau_m}
\partial_{\tau}\left[\,(\tau-\tau_{m-1})
\,v(\tau,x)\,\right]\,d\tau\,\right)^2\,dx\\
\leq&\,2\,\dtau^{-1}\,\sum_{m=1}^{\ssy M}\int_{\ssy D}
\left(\,\int_{\tau_{m-1}}^{\tau_m}\left[\,v(\tau,x)
+(\tau-\tau_{m-1})\,v_{\tau}(\tau,x)\,\right]
\,d\tau\,\right)^2dx\\
\leq&\,4\,\sum_{m=1}^{\ssy M}\int_{\ssy\Delta_m}
\left[\,\|v(\tau,\cdot)\|_{\ssy 0,D}^2 +(\tau-\tau_{m-1})^2\,
\|v_{\tau}(\tau,\cdot)\|_{\ssy 0,D}^2\,\right]\;d\tau\\
\leq&\,4\,\int_0^{\ssy T}\left(\,\|v(\tau,\cdot)\|_{\ssy 0,D}^2
+\tau^2\,\|v_{\tau}(\tau,\cdot)\|_{\ssy 0,D}^2\,\right)\;d\tau,
\end{split}
\end{equation*}
which, along with \eqref{Reggo3} (taking $(\beta,\ell,p)=(0,0,0)$
and $(\beta,\ell,p)=(2,1,0)$), yields
\begin{equation}\label{May2015_24}
2\,\dtau\,\sum_{m=1}^{\ssy M}\|v^m\|_{\ssy 0,D}^2\leq\,C\,\|v_0\|_{\ssy {\bfdot H}^{-1}}^2.
\end{equation}
Observing that  ${\mathcal E}^{\half}=\half\,{\mathcal E}^1$, we have
\begin{equation*}
\begin{split}
\dtau\,\sum_{m=1}^{\ssy M}\|{\mathcal E}^{m-\half}\|_{\ssy 0,D}^2
\leq&\,2\,\left(\,
\dtau\,\|\VV^1\|_{\ssy 0,D}^2
+\dtau\,\sum_{m=2}^{\ssy M}\|\VV^{m-\half}\|_{\ssy 0,D}^2\,\right)\\
&\,+2\, \left(\,
\dtau\,\|v^1\|_{\ssy 0,D}^2+\dtau\,\sum_{m=2}^{\ssy M}\|v^{m-\half}\|_{\ssy 0,D}^2\,\right),\\
\end{split}
\end{equation*}
which, after using \eqref{May2015_23}, \eqref{Museum_1}
and \eqref{May2015_24}, yields that \eqref{Ydaspis900} holds for $\theta=0$.
%
%
%
\par
Hence, the estimate \eqref{Ydaspis900} follows by interpolation.
\end{proof}
%
%
\par
Next, we establish a discrete in time $L^2_t(L^2_x)$ estimate of the nodal error.
%
%
\begin{proposition}\label{Propo_Boom1}
Let $(\VV^m)_{m=0}^{\ssy M}$ be the modified Crank-Nicolson time-discrete
approximations of the solution $v$ to the problem \eqref{Det_Parab}
defined by \eqref{CNDet1}--\eqref{CNDet2}.
Then, there exists a constant $C>0$, independent of $\dtau$, such that
\begin{equation}\label{September2015_0}
\left(\,\dtau\,\sum_{m=1}^{\ssy M}\|\VV^m-v^m\|_{\ssy 0,D}^2\,\right)^{\ssy 1/2}
\leq\,C\,\dtau^{\frac{\delta}{2}}\,\|v_0\|_{\ssy{\bfdot H}^{\delta-1}}
\quad\forall\,\delta\in[0,1],\quad\forall\,v_0\in{\bfdot H}^1(D),
\end{equation}
 where $v^{\ell}:=v(\tau_{\ell},\cdot)$ for $\ell=0,\dots,M$.
\end{proposition}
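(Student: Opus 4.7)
The plan is to prove the estimate at the endpoints $\delta=0$ and $\delta=1$ and then obtain the full range $\delta\in[0,1]$ by interpolation. Setting $E^{\ell}:=\VV^{\ell}-v^{\ell}$ and noting that, following the paper's convention, $E^{m-\half}=\tfrac{1}{2}(E^m+E^{m-1})=\VV^{m-\half}-v^{m-\half}$, the structural input is the parallelogram identity
\begin{equation*}
\|E^m\|_{\ssy 0,D}^2\,\leq\,2\,\|E^{m-\half}\|_{\ssy 0,D}^2+\tfrac{1}{2}\,\|E^m-E^{m-1}\|_{\ssy 0,D}^2,
\end{equation*}
which, after summation, reduces the target $\dtau\sum_m\|E^m\|_{\ssy 0,D}^2$ to a bound on $\dtau\sum_m\|E^{m-\half}\|_{\ssy 0,D}^2$, already available from Proposition~\ref{DetPropo1}, plus a bound on the increment sum $\dtau\sum_m\|E^m-E^{m-1}\|_{\ssy 0,D}^2$.

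For $\delta=1$ I would apply Proposition~\ref{DetPropo1} with $\theta=\tfrac{1}{2}$ for the first piece and split $\|E^m-E^{m-1}\|_{\ssy 0,D}\leq\|\VV^m-\VV^{m-1}\|_{\ssy 0,D}+\|v^m-v^{m-1}\|_{\ssy 0,D}$ for the second. The continuous increments I would write as $v^m-v^{m-1}=\int_{\ssy\Delta_m}v_t\,ds$, handle $m=1$ by the trivial bound $\|v(\dtau,\cdot)-v_0\|_{\ssy 0,D}\leq 2\|v_0\|_{\ssy 0,D}$, and for $m\geq 2$ apply Cauchy-Schwarz together with the parabolic smoothing estimate $\|v_t(\tau,\cdot)\|_{\ssy 0,D}\leq C\,\tau^{-1}\|v_0\|_{\ssy 0,D}$ that follows from \eqref{Reggo0} with $\ell=1$, $p=0$, $q=0$. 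The discrete increments I would analyze in the eigenbasis $(\varepsilon_k)$: writing $\VV^m=\sum_k(v_0,\varepsilon_k)_{\ssy 0,D}\,r_k^{m-1}s_k\,\varepsilon_k$ with $a_k:=\dtau\,\lambda_k^2$, $r_k:=(1-a_k/2)/(1+a_k/2)$, $s_k:=(1+a_k/2)^{-1}$, summing the resulting geometric series, and invoking the elementary inequality $(2+a_k)^2\geq 8a_k$ to obtain $\dtau\sum_m\|\VV^m-\VV^{m-1}\|_{\ssy 0,D}^2\leq C\,\dtau\,\|v_0\|_{\ssy 0,D}^2$.

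For $\delta=0$ the parallelogram route is wasteful, so I would instead use $\|E^m\|_{\ssy 0,D}\leq\|\VV^m\|_{\ssy 0,D}+\|v^m\|_{\ssy 0,D}$ directly. The continuous-side stability $\dtau\sum_m\|v^m\|_{\ssy 0,D}^2\leq C\,\|v_0\|_{\ssy{\bfdot H}^{-1}}^2$ is already contained in \eqref{May2015_24}, and the discrete-side stability $\dtau\sum_m\|\VV^m\|_{\ssy 0,D}^2\leq C\,\|v_0\|_{\ssy{\bfdot H}^{-1}}^2$ follows from the same spectral computation via the identity $\dtau\,s_k^2/(1-r_k^2)=1/(2\lambda_k^2)$. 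A standard real-interpolation argument in $v_0$ between ${\bfdot H}^{-1}(D)$ and ${\bfdot H}^0(D)=L^2(D)$ then yields \eqref{September2015_0} for every $\delta\in[0,1]$. The main obstacle will be the discrete increment bound needed in the $\delta=1$ step: the Rannacher-style first step (a backward Euler of size $\dtau/2$) of the modified scheme is precisely what supplies the smoothing required to keep $\dtau\sum_m\|\VV^m-\VV^{m-1}\|_{\ssy 0,D}^2$ uniformly bounded by $\dtau\,\|v_0\|_{\ssy 0,D}^2$ for data as rough as $v_0\in L^2(D)$.
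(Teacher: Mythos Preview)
Your proposal is correct and follows essentially the same architecture as the paper: prove the endpoint cases $\delta=0,1$ and interpolate, and for $\delta=1$ decompose the nodal error through the midpoint error $\VV^{m-\half}-v^{m-\half}$ (handled by Proposition~\ref{DetPropo1}) plus the discrete and continuous increments. Your parallelogram identity is algebraically equivalent to the paper's three-term triangle inequality $\VV^m-v^m=(\VV^m-\VV^{m-\half})+(\VV^{m-\half}-v^{m-\half})+(v^{m-\half}-v^m)$, since $\VV^m-\VV^{m-\half}=\tfrac{1}{2}(\VV^m-\VV^{m-1})$.

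The one genuine difference is in how you bound the discrete quantities $\dtau\sum_m\|\VV^m-\VV^{m-1}\|_{\ssy 0,D}^2$ and $\dtau\sum_m\|\VV^m\|_{\ssy 0,D}^2$: you compute explicitly in the eigenbasis $(\varepsilon_k)$ and sum the resulting geometric series, whereas the paper obtains the same bounds by energy methods---testing \eqref{CNDet2} with $\VV^m-\VV^{m-1}$ to get the telescoping identity $\dtau\sum_{m\ge2}\|\VV^m-\VV^{m-1}\|_{\ssy 0,D}^2\le\tfrac{\dtau^2}{2}|\VV^1|_{\ssy 1,D}^2$, and then testing \eqref{CNDet11} with $\VV^1$ to control $|\VV^1|_{\ssy 1,D}$ in terms of $\|v_0\|_{\ssy 0,D}$ (for $\delta=1$) or $\|v_0\|_{\ssy{\bfdot H}^{-1}}$ (for $\delta=0$). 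Your spectral route is sharper and more transparent here, while the paper's energy route has the advantage of transferring verbatim to the fully-discrete setting of Proposition~\ref{X_Aygo_Kokora}, where no convenient eigenbasis is available. For $\delta=0$ you also streamline by bounding $\|E^m\|_{\ssy 0,D}$ directly by $\|\VV^m\|_{\ssy 0,D}+\|v^m\|_{\ssy 0,D}$ rather than keeping the three-term splitting; this is slightly cleaner and uses the same ingredients (your spectral stability for $\VV^m$ replaces \eqref{May2015_23}, and \eqref{May2015_24} is reused as you indicate).
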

%
%
\begin{proof}
We will arrive at the error bound \eqref{September2015_0} by interpolation after
proving it for $\delta=1$ and $\delta=0$  (cf. Proposition~\ref{DetPropo1}).
In both cases, the error estimation is based on the following bound
\begin{equation}\label{Hadji0}
\left(\dtau\sum_{m=1}^{\ssy M}\|\VV^m-v^m\|_{\ssy 0,D}^2\right)^{\ssy 1/2}\leq
S_1+S_2+S_3
\end{equation}
where
\begin{equation*}
\begin{split}
S_1:=&\,\left(\dtau\sum_{m=2}^{\ssy M}\|\VV^m-\VV^{m-\half}\|_{\ssy 0,D}^2\right)^{\ssy 1/2},\\
S_2:=&\,\left(\dtau\,\|\VV^1-v^1\|_{\ssy 0,D}^2
+\dtau\sum_{m=2}^{\ssy M}\|\VV^{m-\half}-v^{m-\half}\|_{\ssy 0,D}^2\right)^{\ssy 1/2},\\
S_3:=&\,\left(\dtau\sum_{m=2}^{\ssy M}\|v^{m-\half}-v^m\|_{\ssy 0,D}^2\right)^{\ssy 1/2}.\\
\end{split}
\end{equation*}
In the sequel,  we will use the symbol
$C$ to denote a generic constant that is independent of $\dtau$ and may change value from
one line to the other.
\par
Taking the $L^2(D)-$inner product of both sides of \eqref{CNDet2} with $(\VV^m-\VV^{m-1})$
and then integrating by parts, we easily arrive at
\begin{equation}\label{Hadji1}
\|\VV^m-\VV^{m-1}\|^2_{\ssy 0,D}+\tfrac{\dtau}{2}\,\left(\,|\VV^m|_{\ssy 1,D}^2
-|\VV^{m-1}|_{\ssy 1,D}^2\,\right)=0,\quad m=2,\dots,M.
\end{equation}
After summing both sides of \eqref{Hadji1} with respect to $m$ from $2$ up to $M$, we obtain
\begin{equation*}
\dtau\sum_{m=2}^{\ssy M}\|\VV^m-\VV^{m-1}\|^2_{\ssy 0,D}
+\tfrac{\dtau^2}{2}\,\left(\,|\VV^{\ssy M}|_{\ssy 1,D}^2
-|\VV^1|_{\ssy 1,D}^2\,\right)=0,
\end{equation*}
which yields
\begin{equation}\label{Hadji2}
\dtau\sum_{m=2}^{\ssy M}\|\VV^m-\VV^{m-1}\|^2_{\ssy 0,D}
\leq\tfrac{\dtau^2}{2}\,|\VV^1|_{\ssy 1,D}^2.
\end{equation}
Taking the $L^2(D)-$inner product of both sides of \eqref{CNDet11} with $\VV^1$,
and then integrating by parts and using \eqref{innerproduct}, we obtain:
$\|\VV^1\|_{\ssy 0,D}^2-\|\VV^0\|_{\ssy 0,D}^2+\dtau\,|\VV^1|_{\ssy 1,D}^2\leq0$,
which yields
\begin{equation}\label{Hadji3}
\dtau\,|\VV^1|_{\ssy 1,D}^2\leq\|v_0\|_{\ssy 0,D}^2.
\end{equation}
Thus, combining \eqref{Hadji2} and \eqref{Hadji3}, we get
\begin{equation}\label{Hadji4}
\begin{split}
S_1=&\,\tfrac{1}{2}\,
\left(\dtau\sum_{m=2}^{\ssy M}\|\VV^m-\VV^{m-1}\|_{\ssy 0,D}^2\right)^{\half}\\
\leq&\,\tfrac{1}{2\sqrt{2}}\,\dtau\,|\VV^1|_{\ssy 1,D}\\
\leq&\,\sqrt{\dtau}\,\|v_0\|_{\ssy 0,D}.\\
\end{split}
\end{equation}
Also, we observe that the estimate \eqref{Ydaspis900}, for $\theta=\frac{1}{2}$, yields
\begin{equation}\label{Hadji44}
S_2\leq\,C\,\sqrt{\dtau}\,\|v_0\|_{\ssy 0,D}.
\end{equation}
Finally, using \eqref{Reggo0} (with $\ell=1$, $p=0$, $q=0$), we obtain
\begin{equation}\label{Hadji45}
\begin{split}
S_3\leq&\,\left(\dtau\sum_{m=2}^{\ssy M}
\left\|\int_{\ssy\Delta_m}\partial_{\tau}v(\tau,\cdot)\;d\tau\right\|_{\ssy 0,D}^2
\right)^{\frac{1}{2}}\\
\leq&\,\left(\dtau^2\int_{\dtau}^{\ssy T}\|\partial_{\tau}v(\tau,\cdot)\|^2_{\ssy 0,D}\;d\tau
\right)^{\frac{1}{2}}\\
\leq&\,C\,\left(\dtau^2\int_{\dtau}^{\ssy T}\tau^{-2}\,\|v_0\|^2_{\ssy 0,D}\;d\tau
\right)^{\frac{1}{2}}\\
\leq&\,C\,\dtau\,\|v_0\|_{\ssy 0,D}\,\left(\tfrac{1}{\dtau}-\tfrac{1}{T}\right)^{\frac{1}{2}}\\
\leq&\,C\,\sqrt{\dtau}\,\|v_0\|_{\ssy 0,D}.\\
\end{split}
\end{equation}
Thus, from \eqref{Hadji0}, \eqref{Hadji4}, \eqref{Hadji44} and \eqref{Hadji45} we conclude
\eqref{September2015_0} for $\delta=1$.
\par
Taking again the $L^2(D)-$inner product of both sides of \eqref{CNDet11} with $\VV^1$
and then integrating by parts and using \eqref{Poincare},
\eqref{minus_equiv} and \eqref{H_equiv} along with the
arithmetic mean inequality,  we obtain
\begin{equation*}\label{Hadji5}
\begin{split}
\|\VV^1\|_{\ssy 0,D}^2+\tfrac{\dtau}{2}\,|\VV^1|_{\ssy 1,D}^2=&\,(v_0,\VV^1)_{\ssy 0,D}\\
\leq&\,\|v_0\|_{\ssy -1,D}\,\|\VV^1\|_{\ssy 1,D}\\
\leq&\,C\,\|v_0\|_{\ssy{\bfdot H}^{-1}}\,|\VV^1|_{\ssy 1,D}\\
\leq&\,C\,\dtau^{-1}\,\|v_0\|^2_{\ssy{\bfdot H}^{-1}}
+\tfrac{\dtau}{4}\,|\VV^1|_{\ssy 1,D}^2,
\end{split}
\end{equation*}
%
%
%
which yields that
\begin{equation}\label{Hadji6}
\dtau\,|\VV^1|_{\ssy 1,D}\leq\,C\,\|v_0\|_{\ssy{{\bfdot H}^{-1}}}.
\end{equation}
Thus, combining \eqref{Hadji2} and \eqref{Hadji6}, we conclude that
\begin{equation}\label{Hadji8}
\begin{split}
S_1=&\,\tfrac{1}{2}\,\left(\dtau
\sum_{m=2}^{\ssy M}\|\VV^m-\VV^{m-1}\|_{\ssy 0,D}^2\right)^{\half}\\
\leq&\,\tfrac{1}{2\sqrt{2}}\,\dtau\,|\VV^1|_{\ssy 1,D}\\
\leq&\,C\,\|v_0\|_{\ssy{{\bfdot H}^{-1}}}.
\end{split}
\end{equation}
Also, the estimate \eqref{Ydaspis900}, for $\theta=0$, yields
\begin{equation}\label{Hadji11}
S_2\leq\,C\,\|v_0\|_{\ssy{{\bfdot H}^{-1}}}.
\end{equation}
Using the Cauchy-Schwarz inequality and \eqref{May2015_24}, we have
\begin{equation}\label{Hadji12}
\begin{split}
S_3=&\,\tfrac{1}{2}
\,\left(\dtau\sum_{m=2}^{\ssy M}\|v^m-v^{m-1}\|_{\ssy 0,D}^2\right)^{\half}\\
\leq&\,\tfrac{\sqrt{2}}{2}
\,\left(2\,\dtau\sum_{m=1}^{\ssy M}\|v^m\|_{\ssy 0,D}^2\right)^{\half}\\
\leq&\,C\,\|v_0\|_{\ssy {\bfdot H}^{-1}}.\\
\end{split}
\end{equation}
Thus, from \eqref{Hadji0}, \eqref{Hadji8}, \eqref{Hadji11} and \eqref{Hadji12} we conclude
\eqref{September2015_0} for $\delta=0$.
\end{proof}
%
%
%
%
%
%
\subsection{Fully-Discrete Approximations}\label{Section42}
%
%
In this section we construct and analyze finite element approximations,
$(\VV_h^m)_{m=0}^{\ssy M}$,  of the modified Crank-Nicolson
time-discrete approximations defined in Section~\ref{Section41}.
The method begins by setting
\begin{equation}\label{CNFD1}
\VV_h^0:=P_hv_0
\end{equation}
and finding $\VV_h^1\in\fem$ such that
\begin{equation}\label{CNFD12}
\VV_h^1-\VV_h^0+\tfrac{\dtau}{2}\,\Delta_h\VV_h^1 =0.
\end{equation}
Then, for $m=2,\dots,M$, it specifies $\VV_h^m\in\fem$ such
that
\begin{equation}\label{CNFD2}
\VV_h^m-\VV_h^{m-1}+\dtau\,\Delta_h\VV_h^{m-\frac{1}{2}}=0.
\end{equation}
\par
First,  we show a discrete in time $L^2_t(L^2_x)$ a priori estimate of time averages
of the nodal error between the modified Crank-Nicolson time-discrete approximations
presented in the previous section and
the modified Crank-Nicolson fully-discrete approximations defined above.
%
%
\begin{proposition}\label{Aygo_Kokora}
Let $(\VV^m)_{m=0}^{\ssy M}$ be the Crank-Nicolson time-discrete
approxi\-mations defined by \eqref{CNDet1}--\eqref{CNDet2}
and $(\VV_h^m)_{m=0}^{\ssy M}$ be the modified Crank-Nicolson
fully-discrete approximations defined by \eqref{CNFD1}--\eqref{CNFD2}.
Then, there exists a constant $C>0$,
independent of $h$ and $\dtau$, such that
\begin{equation}\label{August2017_1}
\left(\,\dtau\,\|\VV^{1}-\VV_h^{1}\|^2_{\ssy 0,D}
+\dtau\sum_{m=2}^{\ssy M}
\|\VV^{m-\frac{1}{2}}-\VV_h^{m-\frac{1}{2}}\|^2_{\ssy
0,D}\,\right)^{\ssy 1/2}\leq \,C\,\,h^{2\theta}
\,\,\|v_0\|_{\ssy {\bfdot H}^{2\theta-1}}
\end{equation}
for all $\theta\in[0,1]$ and $v_0\in {\bfdot H}^1(D)$.
\end{proposition}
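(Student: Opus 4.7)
\medskip

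\noindent\textbf{Proof proposal.} The plan is to establish \eqref{August2017_1} at the two endpoints $\theta=1$ and $\theta=0$, and then obtain the full range $\theta\in[0,1]$ by interpolation, in exactly the spirit of the proofs of Proposition~\ref{DetPropo1} and Proposition~\ref{Propo_Boom1}. Throughout, set $e^m:=\VV^m-\VV_h^m$ and $\eta^m:=T_{{\ssy E},h}e^m$; observe that, because $P_h(v_0-P_hv_0)=0$, the identity $T_{{\ssy E},h}f=-\Delta_h^{-1}P_hf$ forces $\eta^0=0$, which will play the role of the ``zero initial error'' in the energy argument.

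The first step is to derive an error identity by applying $T_{\ssy E}$ to \eqref{CNDet2} and $T_{{\ssy E},h}$ to \eqref{CNFD2}, observing that $T_{{\ssy E},h}(\Delta_h\chi)=-\chi$ for $\chi\in\fem$. For $m\ge 2$ this yields
\begin{equation*}
T_{{\ssy E},h}(e^m-e^{m-1})-\dtau\,e^{m-\frac{1}{2}}=-(T_{\ssy E}-T_{{\ssy E},h})(\VV^m-\VV^{m-1}),
\end{equation*}
together with the analogous relation for $m=1$ obtained from \eqref{CNDet11} and \eqref{CNFD12} (with the factor $\dtau/2$ in place of $\dtau$, and $e^1$ in place of $e^{m-\frac{1}{2}}$). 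For the $\theta=1$ endpoint, I would take the $L^2(D)$-inner product of these identities with $e^{m-\frac{1}{2}}$ (resp.\ $e^1$), apply \eqref{Frako1} to turn the left-hand side into a telescoping $|\eta^m|_{\ssy 1,D}^2$-difference, use \eqref{innerproduct} for the $m=1$ step, and sum. Since $\eta^0=0$, the boundary term vanishes, and a Cauchy--Schwarz plus arithmetic-geometric mean estimate combined with \eqref{pin1} reduces matters to controlling
\begin{equation*}
\dtau^{-1}\left(\|\VV^1-\VV^0\|_{\ssy 0,D}^2+\sum_{m=2}^{\ssy M}\|\VV^m-\VV^{m-1}\|_{\ssy 0,D}^2\right)
\leq\,C\,\|v_0\|_{\ssy{\bfdot H}^1}^2.
\end{equation*}
This last bound is obtained by testing \eqref{CNDet11} with $-\partial^2\VV^1$ and \eqref{CNDet2} with $\VV^m-\VV^{m-1}$, which telescopes to $\tfrac{\dtau}{2}|\VV^1|_{\ssy 1,D}^2\le\tfrac{\dtau}{2}|\VV^0|_{\ssy 1,D}^2$, and then using \eqref{H_equiv} to identify $|v_0|_{\ssy 1,D}$ with $\|v_0\|_{\ssy{\bfdot H}^1}$. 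This delivers \eqref{August2017_1} for $\theta=1$ with rate $h^2\|v_0\|_{\ssy{\bfdot H}^1}$.

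For the $\theta=0$ endpoint I would proceed by the triangle inequality, writing $\|e^{m-\frac{1}{2}}\|_{\ssy 0,D}\le\|\VV^{m-\frac{1}{2}}\|_{\ssy 0,D}+\|\VV_h^{m-\frac{1}{2}}\|_{\ssy 0,D}$ (and analogously for $e^1$), and then invoking the stability estimate \eqref{May2015_23} obtained inside the proof of Proposition~\ref{DetPropo1}, together with its direct analogue for the fully-discrete scheme. The latter is derived by the same energy argument applied to \eqref{CNFD1}--\eqref{CNFD2}, using \eqref{Frako1} in place of \eqref{TB-prop1}, and then bounding $\|P_hv_0\|_{\ssy{\bfdot H}^{-1}}\le C\|v_0\|_{\ssy{\bfdot H}^{-1}}$ via the $H^1$-stability of the $L^2$-projection $P_h$ on quasi-uniform meshes. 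This gives \eqref{August2017_1} for $\theta=0$, with no power of $h$ on the right-hand side.

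The main technical obstacle I expect is the bookkeeping of the first time step: the $\tfrac{\dtau}{2}$-scheme of \eqref{CNDet11}--\eqref{CNFD12} forces a separate treatment of $m=1$, and one must ensure that, after the Cauchy--Schwarz step, the absorbed $\tfrac{\dtau}{4}\|e^1\|_{\ssy 0,D}^2$ on the right-hand side still leaves a definite multiple of $\dtau\|e^1\|_{\ssy 0,D}^2$ on the left, so that it can be combined with the sum over $m\ge 2$ without losing a factor. Once both endpoints are established, the claimed bound for arbitrary $\theta\in[0,1]$ follows by standard real interpolation of the linear map $v_0\mapsto(e^1,e^{\frac{3}{2}},\dots,e^{\ssy M-\frac{1}{2}})$ between $\bfdot H^{-1}(D)$ and $\bfdot H^1(D)$, exactly as at the end of the proof of Proposition~\ref{DetPropo1}.
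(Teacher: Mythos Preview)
Your proposal is correct and follows essentially the same route as the paper: derive the error equations through the discrete elliptic operator $T_{{\ssy E},h}$, establish the endpoint estimates $\theta=1$ (via an energy argument and \eqref{pin1}, reducing to an $H^2$-type bound on the time-discrete iterates) and $\theta=0$ (via the triangle inequality and separate stability of the two schemes), and interpolate. Your error identity is exactly the paper's \eqref{SaintMinas_1a}--\eqref{SaintMinas_1}, since $(T_{\ssy E}-T_{{\ssy E},h})(\VV^m-\VV^{m-1})=\dtau\,(T_{\ssy E}-T_{{\ssy E},h})\partial^2\VV^{m-\frac{1}{2}}=-\dtau\,\xi_m$; and your bound on $\dtau^{-1}\sum\|\VV^m-\VV^{m-1}\|_{\ssy 0,D}^2$ is equivalent to the paper's bound on $\dtau\sum|\VV^{m-\frac{1}{2}}|_{\ssy 2,D}^2$ because $\VV^m-\VV^{m-1}=\dtau\,\partial^2\VV^{m-\frac{1}{2}}$.

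There is one unnecessary detour in your $\theta=0$ step. You propose to bound $\|P_hv_0\|_{\ssy{\bfdot H}^{-1}}\le C\,\|v_0\|_{\ssy{\bfdot H}^{-1}}$ by invoking the $H^1$-stability of $P_h$, which imports a quasi-uniformity hypothesis the paper does not need here. The paper instead uses the built-in identity $T_{{\ssy E},h}P_hv_0=-\Delta_h^{-1}P_h(P_hv_0)=-\Delta_h^{-1}P_hv_0=T_{{\ssy E},h}v_0$, so the energy argument for the fully-discrete scheme ends with $|T_{{\ssy E},h}\VV_h^1|_{\ssy 1,D}^2+\dtau\,\|\VV_h^1\|_{\ssy 0,D}^2\le|T_{{\ssy E},h}v_0|_{\ssy 1,D}^2$, and then \eqref{DELSTAB} together with \eqref{minus_equiv} gives the $\|v_0\|_{\ssy{\bfdot H}^{-1}}$ bound directly. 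Apart from this, your plan matches the paper's proof, and the first-step bookkeeping you flag is handled exactly as you anticipate (cf.\ the paper's \eqref{July2015_1}, which controls both $|T_{{\ssy E},h}\Theta^1|_{\ssy 1,D}^2$ and $\dtau\|\Theta^1\|_{\ssy 0,D}^2$ simultaneously).
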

%
%
%
%
%
%
%
\begin{proof}
We will get the error estimate \eqref{August2017_1} by interpolation after proving it for $\theta=1$ and
$\theta=0$ (cf. \cite{KZ2010}). In the sequel,  we will use the symbol
$C$ to denote a generic constant that is independent of $\dtau$ and $h$,
and may change value from one line to the other.
\par
Letting ${\sf \Theta}^{\ell}:=\VV^{\ell}-\VV_h^{\ell}$ for $\ell=0,\dots,M$, we use
\eqref{CNDet11}, \eqref{CNFD12}, \eqref{CNDet2} and \eqref{CNFD2},
to arrive at the following error equations:
\begin{gather}
T_{\ssy E,h}({\sf\Theta}^1-{\sf\Theta}^{0})=\tfrac{\dtau}{2}\,{\sf\Theta}^1
+\tfrac{\dtau}{2}\,\xi_1,\label{SaintMinas_1a}\\
T_{\ssy E,h}({\sf\Theta}^m-{\sf\Theta}^{m-1})=\dtau\,{\sf\Theta}^{m-\frac{1}{2}}
+\dtau\,\xi_m,\quad m=2,\dots,M,\label{SaintMinas_1}
\end{gather}
where
\begin{gather}
\xi_1:=(T_{\ssy E,h}-T_{\ssy E})\partial^2\VV^1\label{xi_def_1},\\
\xi_{\ell}:=(T_{\ssy E,h}-T_{\ssy E})\partial^2\VV^{\ell-\frac{1}{2}},
\quad\ell=2,\dots,M.\label{xi_def_2}
\end{gather}
Taking the $L^2(D)-$inner product of both sides of
\eqref{SaintMinas_1} with ${\sf \Theta}^{m-\frac{1}{2}}$ and then using \eqref{Frako1},
the Cauchy-Schwarz inequality along with the arithmetic mean inequality, we obtain
\begin{equation*}
|T_{\ssy E,h}{\sf\Theta}^m|_{\ssy 1,D}^2
-|T_{\ssy B,h}{\sf\Theta}^{m-1}|_{\ssy 1,D}^2
+\dtau\,\|{\sf\Theta}^{m-\frac{1}{2}}\|_{\ssy
0,D}^2\leq\dtau\,\|\xi_m\|_{\ssy 0,D}^2,\quad m=2,\dots,M.
\end{equation*}
After summing with respect to $m$ from $2$ up to $M$, the relation above yields
%
%
%
\begin{equation}\label{citah3}
\dtau\,\|{\sf\Theta}^1\|_{\ssy 0,D}^2
+\dtau\,\sum_{m=2}^{\ssy M}\|{\sf\Theta}^{m-\frac{1}{2}}\|_{\ssy 0,D}^2
\leq\,|T_{\ssy E,h}{\sf\Theta}^1|_{\ssy 1,D}^2
+\dtau\,\|{\sf\Theta}^1\|^2_{\ssy 0,D}
+\dtau\sum_{m=2}^{\ssy M}\left\|\xi_m\right\|_{\ssy 0,D}^2.
\end{equation}
Observing that $T_{\ssy E,h}{\sf\Theta}^0=0$, we
take the $L^2(D)-$inner product of both sides of
\eqref{SaintMinas_1a} with ${\sf \Theta}^{1}$ and then
use the Cauchy-Schwarz inequality along with the
arithmetic mean inequality, to get
\begin{equation}\label{July2015_1}
|T_{\ssy E,h}{\sf\Theta^1}|_{\ssy 1,D}^2+\tfrac{\dtau}{4}\,
\|{\sf\Theta^1}\|^2_{\ssy 0,D}\leq\,\tfrac{\dtau}{4}\,\|\xi_1\|_{\ssy 0,D}^2.
\end{equation}
Thus, using \eqref{citah3}, \eqref{July2015_1},
\eqref{xi_def_1}, \eqref{xi_def_2} and \eqref{pin1},
we easily conclude that
\begin{equation}\label{July2015_2}
\begin{split}
\dtau\,\|{\sf\Theta}^1\|_{\ssy 0,D}^2
&+\dtau\,\sum_{m=2}^{\ssy M}\|{\sf\Theta}^{m-\frac{1}{2}}\|_{\ssy 0,D}^2
\leq\,\dtau\,\|\xi_1\|_{\ssy 0,D}^2
+\dtau\,\sum_{m=2}^{\ssy M}\left\|\xi_m\right\|_{\ssy 0,D}^2\\
\leq&\,C\,h^{4}\,\left(\,\dtau\,|\VV^1|^2_{\ssy 2,D}
+\dtau\,\sum_{m=2}^{\ssy M}
|\VV^{m-\frac{1}{2}}|_{\ssy 2,D}^2\right).
\end{split}
\end{equation}
Taking the $L^2(D)-$inner product of \eqref{CNDet2} with
$\partial^2\VV^{m-\frac{1}{2}}$,
and then integrating by parts and summing with respect to $m$,
from $2$ up to $M$, it follows that
\begin{equation*}
|\VV^{\ssy M}|_{\ssy 1,D}^2-|\VV^1|_{\ssy 1,D}^2
+2\,\dtau\,\sum_{m=2}^{\ssy M}
|\VV^{m-\frac{1}{2}}|_{\ssy 2,D}^2=0
\end{equation*}
which yields
\begin{equation}\label{citah6}
\dtau\,|\VV^1|_{\ssy 2,D}^2
+\sum_{m=2}^{\ssy M}
\dtau\,|\VV^{m-\frac{1}{2}}|_{\ssy 2,D}^2
\leq\,\tfrac{1}{2}\,|\VV^1|_{\ssy 1,D}^2
+\dtau\,|\VV^1|_{\ssy 2,D}^2.
\end{equation}
Now, take the $L^2(D)-$inner product of \eqref{CNDet11}
with $\partial^2\VV^1$, and then
integrate by parts and use \eqref{innerproduct} to get
\begin{equation*}
|\VV^1|^2_{\ssy 1,D}-|\VV^0|_{\ssy 1,D}^2
+\dtau\,|\VV^1|_{\ssy 2,D}^2\leq0,
\end{equation*}
which, along with \eqref{H_equiv}, yields
\begin{equation}\label{citah6a}
|\VV^1|_{\ssy 1,D}^2
+\dtau\,|\VV^1|_{\ssy 2,D}^2\leq\,\|v_0\|_{\ssy {\bfdot H}^1}^2.
\end{equation}
Thus, combining \eqref{July2015_2}, \eqref{citah6} and \eqref{citah6a},
we obtain \eqref{August2017_1} for $\theta=1$.
%
%
%
\par
From \eqref{CNFD12} and \eqref{CNFD2}, it follows that
\begin{equation}\label{Frago_0}
-T_{\ssy E,h}(\VV_h^1-\VV_h^0)+\tfrac{\dtau}{2}\,\VV_h^1=0,
\end{equation}
\begin{equation}\label{Frago_1}
-T_{\ssy E,h}(\VV_h^m-\VV_h^{m-1})+\dtau\,\VV_h^{m-\frac{1}{2}}=0,
\quad m=2,\dots,M.
\end{equation}
Taking the $L^2(D)-$inner product
of \eqref{Frago_1} with $\VV_h^{m-\frac{1}{2}}$ and using \eqref{Frako1},
we have
\begin{equation*}
|T_{\ssy E,h}\VV_h^m|_{\ssy 1,D}^2
-|T_{\ssy E,h}\VV_h^{m-1}|_{\ssy 1,D}^2
+2\,\dtau\,\|\VV_h^{m-\frac{1}{2}}\|_{\ssy 0,D}^2=0, \quad
m=2,\dots,M,
\end{equation*}
which, after summing with respect to $m$ from $2$ up to $M$, yields
\begin{equation}\label{Frago_2}
\dtau\,\|\VV_h^1\|^2_{\ssy 0,D}
+\dtau\sum_{m=2}^{\ssy M}\|\VV_h^{m-\frac{1}{2}}\|_{\ssy 0,D}^2
\leq\tfrac{1}{2}\,|T_{\ssy E,h}\VV_h^1|_{\ssy 1,D}^2
+\dtau\,\|\VV_h^1\|^2_{\ssy 0,D}.
\end{equation}
Now, take the $L^2(D)-$inner product
of \eqref{Frago_0} with $\VV_h^1$ and use \eqref{innerproduct}
and \eqref{CNFD1}, to have
\begin{equation}\label{Frago_3}
\begin{split}
|T_{\ssy E,h}\VV_h^1|_{\ssy 1,D}^2
+\dtau\,\|\VV_h^{1}\|_{\ssy 0,D}^2\leq&\,
|T_{\ssy E,h}P_hv_0|_{\ssy 1,D}^2\\
\leq&\,|T_{\ssy E,h}v_0|_{\ssy 1,D}^2.\\
\end{split}
\end{equation}
Combining \eqref{Frago_2}, \eqref{Frago_3}, \eqref{DELSTAB}
and \eqref{minus_equiv}, we obtain
\begin{equation}\label{Frago_4}
\left(\dtau\,\|\VV_h^1\|^2_{\ssy 0,D}
+\dtau\,\sum_{m=2}^{\ssy M}
\,\|\VV_h^{m-\frac{1}{2}}\|_{\ssy 0,D}^2\right)^{\ssy 1/2}
\leq\,C\,\|v_0\|_{\ssy{\bfdot H}^{-1}}.
\end{equation}
Finally, combine \eqref{Frago_4} with
\eqref{May2015_23} to get \eqref{August2017_1} for $\theta=0$.
\end{proof}
%
%
\par
Next,  we derive a discrete in time $L^2_t(L^2_x)$ a priori estimate
of the nodal error between the modified Crank-Nicolson time-discrete approximations and
the modified Crank-Nicolson fully-discrete approximations.
%
%
\begin{proposition}\label{X_Aygo_Kokora}
Let
$(\VV^m)_{m=0}^{\ssy M}$ be the modified Crank-Nicolson time-discrete
approximations defined by \eqref{CNDet1}--\eqref{CNDet2},
and $(\VV_h^m)_{m=0}^{\ssy M}$ be the modified Crank-Nicolson finite element
approximations specified by \eqref{CNFD1}--\eqref{CNFD2}.
Then, there exists a constant $C>0$,
independent of $h$ and $\dtau$, such that
\begin{equation}\label{X_tiger_river1}
\left(\,\dtau\sum_{m=1}^{\ssy M}
\|\VV^{m}-\VV_h^{m}\|^2_{\ssy 0,D}\,\right)^{\ssy 1/2}
\leq \,C\,\left(\,\dtau^{\frac{\delta}{2}}
\,\|v_0\|_{\ssy {\bfdot H}^{\delta-1}}
+h^{2\theta}
\,\,\|v_0\|_{\ssy {\bfdot H}^{2\theta-1}}\,\right)
\end{equation}
for all $\delta$, $\theta\in[0,1]$ and $v_0\in {\bfdot H}^2(D)$.
\end{proposition}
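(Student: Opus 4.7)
The plan is to obtain \eqref{X_tiger_river1} by interpolation, after first establishing it separately for the extreme values of the parameters. The key observation is that $\VV^m-\VV^{m-\frac{1}{2}}=\tfrac{1}{2}(\VV^m-\VV^{m-1})$ and $\VV_h^m-\VV_h^{m-\frac{1}{2}}=\tfrac{1}{2}(\VV_h^m-\VV_h^{m-1})$, which allows the triangle inequality to split the error at the full nodes into a half-node piece (where Proposition~\ref{Aygo_Kokora} already applies) plus two increment pieces living separately on the time-discrete and fully-discrete sides. Concretely, I would decompose
\begin{equation*}
\left(\dtau\sum_{m=1}^{\ssy M}\|\VV^m-\VV_h^m\|_{\ssy 0,D}^2\right)^{\ssy 1/2}\leq T_1+T_2+T_3,
\end{equation*}
where $T_1:=\tfrac{1}{2}\bigl(\dtau\sum_{m=2}^{M}\|\VV^m-\VV^{m-1}\|_{\ssy 0,D}^2\bigr)^{1/2}$, $T_3:=\tfrac{1}{2}\bigl(\dtau\sum_{m=2}^{M}\|\VV_h^m-\VV_h^{m-1}\|_{\ssy 0,D}^2\bigr)^{1/2}$, and $T_2:=\bigl(\dtau\|\VV^1-\VV_h^1\|_{\ssy 0,D}^2+\dtau\sum_{m=2}^{M}\|\VV^{m-\half}-\VV_h^{m-\half}\|_{\ssy 0,D}^2\bigr)^{1/2}$.

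Next, $T_2$ is handled immediately by Proposition~\ref{Aygo_Kokora}, contributing the $h^{2\theta}\|v_0\|_{\ssy\bfdot H^{2\theta-1}}$ term. For $T_1$, I would repeat the energy argument used in the treatment of $S_1$ in the proof of Proposition~\ref{Propo_Boom1}: testing \eqref{CNDet2} with $\VV^m-\VV^{m-1}$ and integrating by parts telescopes to $\dtau\sum_{m=2}^{M}\|\VV^m-\VV^{m-1}\|_{\ssy 0,D}^2\leq\tfrac{\dtau^2}{2}|\VV^1|_{\ssy 1,D}^2$, and then \eqref{Hadji3} and \eqref{Hadji6} already provide the needed bounds on $\dtau|\VV^1|_{\ssy 1,D}$ for the endpoints $\delta=1$ and $\delta=0$ respectively. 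For $T_3$ I would mimic the same argument at the fully-discrete level: testing \eqref{CNFD2} with $\VV_h^m-\VV_h^{m-1}$ and using $(\Delta_h\varphi,\chi)_{\ssy 0,D}=(\partial\varphi,\partial\chi)_{\ssy 0,D}$ gives the discrete energy identity
\begin{equation*}
\|\VV_h^m-\VV_h^{m-1}\|_{\ssy 0,D}^2+\tfrac{\dtau}{2}\bigl(|\VV_h^m|_{\ssy 1,D}^2-|\VV_h^{m-1}|_{\ssy 1,D}^2\bigr)=0,\quad m=2,\dots,M,
\end{equation*}
so that $\dtau\sum_{m=2}^{M}\|\VV_h^m-\VV_h^{m-1}\|_{\ssy 0,D}^2\leq\tfrac{\dtau^2}{2}|\VV_h^1|_{\ssy 1,D}^2$. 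It remains to bound $\dtau|\VV_h^1|_{\ssy 1,D}$: for $\delta=1$, testing \eqref{CNFD12} with $\VV_h^1$ and applying \eqref{innerproduct} yields $\dtau|\VV_h^1|_{\ssy 1,D}^2\leq\|v_0\|_{\ssy 0,D}^2$; for $\delta=0$, testing with $\VV_h^1$ and bounding $(P_hv_0,\VV_h^1)_{\ssy 0,D}=(v_0,\VV_h^1)_{\ssy 0,D}\leq C\|v_0\|_{\ssy\bfdot H^{-1}}|\VV_h^1|_{\ssy 1,D}$ via \eqref{Poincare}, \eqref{minus_equiv}, \eqref{H_equiv} and the arithmetic mean inequality gives $\dtau|\VV_h^1|_{\ssy 1,D}\leq C\|v_0\|_{\ssy\bfdot H^{-1}}$ (the same mechanism that produces \eqref{Hadji6}).

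Putting the two endpoint estimates together, $T_1+T_3\leq C\,\dtau^{\delta/2}\,\|v_0\|_{\ssy\bfdot H^{\delta-1}}$ for $\delta\in\{0,1\}$, and combined with Proposition~\ref{Aygo_Kokora} this gives \eqref{X_tiger_river1} at the four corners $(\delta,\theta)\in\{0,1\}^2$. The full range $\delta,\theta\in[0,1]$ then follows by interpolation, which also explains the assumption $v_0\in\bfdot H^2(D)$ needed to make sense of the endpoint $\theta=1$ simultaneously with the other parameter. The main obstacle I anticipate is the fully-discrete first-step estimate for $\dtau|\VV_h^1|_{\ssy 1,D}$ in the $\delta=0$ case, because \eqref{CNFD12} is a single backward-Euler-type step where one cannot simply import the bound \eqref{Hadji6} from the continuous-in-space setting; one must work on $\fem$ and rely on \eqref{Poincare} and the equivalence of norms on $\bfdot H^{-1}$ to dualize properly, exactly as in the passage leading to \eqref{Frago_4}.
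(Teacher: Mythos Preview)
Your proposal is correct and follows essentially the same route as the paper: the same three-term splitting $T_1+T_2+T_3={\mathcal S}_1+{\mathcal S}_2+{\mathcal S}_3$, the same energy identities for the increment terms (at both the time-discrete and fully-discrete levels), the same first-step estimates on $\dtau|\VV^1|_{\ssy 1,D}$ and $\dtau|\VV_h^1|_{\ssy 1,D}$ for the two endpoint values of $\delta$, and Proposition~\ref{Aygo_Kokora} for the half-node term. The only cosmetic difference is that the paper interpolates separately in $\delta$ (for ${\mathcal S}_1+{\mathcal S}_3$) and in $\theta$ (for ${\mathcal S}_2$) rather than speaking of ``four corners'', but the substance is identical.
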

%
%
%
%
%
\begin{proof}
The proof is based on the estimation of the terms
in the right hand side of the following triangle inequality:
\begin{equation}\label{X_Hadji0}
\left(\dtau\sum_{m=1}^{\ssy M}\|\VV^m-\VV_h^m\|_{\ssy 0,D}^2
\right)^{\ssy 1/2}\leq
{\mathcal S}_1+{\mathcal S}_2+{\mathcal S}_3
\end{equation}
where
\begin{equation*}
\begin{split}
{\mathcal S}_1:=&\,\left(\dtau\sum_{m=2}^{\ssy M}\|\VV^m-\VV^{m-\half}\|_{\ssy 0,D}^2\right)^{\ssy 1/2},\\
{\mathcal S}_2:=&\,\left(\dtau\,\|\VV^1-\VV_h^1\|_{\ssy 0,D}^2
+\dtau\sum_{m=2}^{\ssy M}\|\VV^{m-\half}-\VV_h^{m-\half}\|_{\ssy 0,D}^2\right)^{\ssy 1/2},\\
{\mathcal S}_3:=&\,\left(\dtau\sum_{m=2}^{\ssy M}\|\VV_h^{m-\half}-\VV_h^m\|_{\ssy 0,D}^2\right)^{\ssy 1/2}.\\
\end{split}
\end{equation*}
In the sequel,  we will use the symbol $C$ to denote a generic constant that is
independent of $\dtau$ and may change value from one line to the other.
\par
Taking the $L^2(D)-$inner product of both sides of \eqref{CNFD2}
with $(\VV_h^m-\VV_h^{m-1})$, we have
\begin{equation}\label{X_Hadji1}
\|\VV_h^m-\VV_h^{m-1}\|^2_{\ssy 0,D}
+\tfrac{\dtau}{2}\,\left(\,|\VV_h^m|_{\ssy 1,D}^2
-|\VV_h^{m-1}|_{\ssy 1,D}^2\,\right)=0,\quad m=2,\dots,M.
\end{equation}
After summing both sides of \eqref{X_Hadji1} with respect to $m$ from $2$ up to $M$, we obtain
\begin{equation*}
\dtau\sum_{m=2}^{\ssy M}\|\VV_h^m-\VV_h^{m-1}\|^2_{\ssy 0,D}
+\tfrac{\dtau^2}{2}\,\left(\,|\VV_h^{\ssy M}|_{\ssy 1,D}^2
-|\VV_h^{1}|_{\ssy 1,D}^2\,\right)=0,
\end{equation*}
which yields
\begin{equation}\label{X_Hadji2}
\dtau\sum_{m=2}^{\ssy M}\|\VV_h^m-\VV_h^{m-1}\|^2_{\ssy 0,D}
\leq\tfrac{\dtau^2}{2}\,|\VV_h^{1}|_{\ssy 1,D}^2.
\end{equation}
Taking the $L^2(D)-$inner product of both sides of \eqref{CNFD12} with $\VV_h^1$
and then using \eqref{innerproduct}, we obtain
\begin{equation*}
\|\VV_h^1\|_{\ssy 0,D}^2-\|\VV_h^0\|_{\ssy 0,D}^2
+\dtau\,|\VV_h^1|_{\ssy 1,D}^2\leq0
\end{equation*}
from which we conclude that
\begin{equation}\label{X_Hadji3}
\dtau\,|\VV_h^1|_{\ssy 1,D}^2\leq\|v_0\|_{\ssy 0,D}^2.
\end{equation}
Thus, combining \eqref{X_Hadji2} and \eqref{X_Hadji3} we have
\begin{equation}\label{X_Hadji4}
\begin{split}
{\mathcal S}_3=&\,\tfrac{1}{2}\,
\left(\dtau\sum_{m=2}^{\ssy M}\|\VV_h^m-\VV_h^{m-1}\|_{\ssy 0,D}^2\right)^{\ssy 1/2}\\
\leq&\,\tfrac{1}{2\sqrt{2}}\,\dtau\,|\VV^1_h|_{\ssy 1,D}\\
\leq&\,\dtau^{\half}\,\|v_0\|_{\ssy 0,D}.\\
\end{split}
\end{equation}
Taking again the $L^2(D)-$inner product of both sides of \eqref{CNFD12} with $\VV_h^1$
and then using \eqref{minus_equiv} and \eqref{Poincare} along with the
arithmetic mean inequality,  we obtain
\begin{equation*}\label{X_Hadji5}
\begin{split}
\|\VV_h^1\|_{\ssy 0,D}^2+\tfrac{\dtau}{2}
\,|\VV_h^1|_{\ssy 1,D}^2=&\,(P_hv_0,\VV_h^1)_{\ssy 0,D}\\
=&\,(v_0,\VV_h^1)_{\ssy 0,D}\\
\leq&\,\|v_0\|_{\ssy -1,D}\,\|\VV_h^1\|_{\ssy 1,D}\\
\leq&\,C\,\|v_0\|_{\ssy{\bfdot H}^{-1}}\,|\VV_h^1|_{\ssy 1,D}\\
\leq&\,C\,\dtau^{-1}\|v_0\|^2_{\ssy{\bfdot H}^{-1}}
+\tfrac{\dtau}{4}\,|\VV_h^1|_{\ssy 1,D}^2,
\end{split}
\end{equation*}
which yields that
\begin{equation}\label{X_Hadji6}
\dtau^2\,|\VV_h^1|_{\ssy 1,D}^2
\leq\,C\,\|v_0\|_{\ssy{{\bfdot H}^{-1}}}^2.
\end{equation}
Thus, combining \eqref{X_Hadji2} and \eqref{X_Hadji6}, we conclude that
\begin{equation}\label{X_Hadji8}
\begin{split}
{\mathcal S}_3
=&\,\tfrac{1}{2}\,
\left(\dtau\sum_{m=2}^{\ssy M}\|\VV_h^m-\VV_h^{m-1}\|_{\ssy 0,D}^2\right)^{\ssy 1/2}\\
\leq&\,\tfrac{1}{2\sqrt{2}}\,\dtau\,|\VV^1_h|_{\ssy 0,D}\\
\leq&\,C\,\|v_0\|_{\ssy{{\bfdot H}^{-1}}}.
\end{split}
\end{equation}
Also, from \eqref{Hadji4} and \eqref{Hadji8}, we have
\begin{equation}\label{XX_Hadji4}
{\mathcal S}_1\leq\,C\,\dtau^{\half}\,\|v_0\|_{\ssy 0,D}
\end{equation}
and
\begin{equation}\label{XX_Hadji44}
{\mathcal S}_1\leq\,C\,\|v_0\|_{\ssy{{\bfdot H}^{-1}}}.
\end{equation}
By interpolation, from \eqref{X_Hadji4}, \eqref{XX_Hadji4}, \eqref{X_Hadji8}
and \eqref{XX_Hadji44}, we conclude that
\begin{equation}\label{XXX_1}
{\mathcal S}_2+{\mathcal S}_3\leq\,C\,\dtau^{\frac{\delta}{2}}
\,\|w_0\|_{\ssy{{\bfdot H}^{\delta-1}}}
\quad\forall\,\delta\in[0,1].
\end{equation}
Finally, the estimate \eqref{August2017_1} reads
\begin{equation}\label{XXX_2}
{\mathcal S}_2\leq\,C\,h^{2\theta}\,\|v_0\|_{\ssy{\bfdot H}^{2\theta-1}}
\quad\forall\,\theta\in[0,1].
\end{equation}
Thus, \eqref{X_tiger_river1} follows as a simple consequence of
\eqref{X_Hadji0}, \eqref{XXX_1} and \eqref{XXX_2}.
\end{proof}
%
%
%
%
\section{Convergence Analysis of the Crank-Nicolson finite element method}\label{Section5}
%
%
In this section, we focus on the derivation of an estimate of the approximation error of
the Crank-Nicolson finite element method introduced in Section~\ref{kefalaiaki}. For that,
we will use, as a comparison tool (cf. \cite{KZ2010}, \cite{Zouraris2017}), the corresponding
Crank-Nicolson time-discrete approximations of ${\widehat u}$, which are defined first by setting
\begin{equation}\label{CN_S1}
U^0:=0
\end{equation}
and then, for $m=1,\dots,M$, by specifying $U^m\in{\bfdot H}^2(D)$ such that
\begin{equation}\label{CN_S2}
U^m-U^{m-1}=\dtau\,\partial^2 U^{m-\frac{1}{2}}
+\int_{\ssy\Delta_m}{\mathcal W}\,ds\quad\text{\rm a.s.}.
\end{equation}
Thus, we split the {\sl discretization error} of the Crank Nicolson
finite element method as follows
\begin{equation}\label{split_split}
\max_{0\leq{m}\leq{\ssy M}}\left({\mathbb E}\left[\,
\|\uu^m-U_h^m\|_{\ssy 0,D}^2\,\right]\right)^{\ssy 1/2}
\leq\max_{1\leq{m}\leq{\ssy M}}{\sf E}_{\ssy\sf TDR}^m
+\max_{1\leq{m}\leq{\ssy M}}{\sf E}^m_{\ssy\sf SDR},
\end{equation}
where $\uu^m:=\uu(\tau_m,\cdot)$,
${\sf E}^m_{\ssy\sf TDR}:=
\left({\mathbb E}\left[\|\uu^m-U^m\|_{\ssy 0,D}^2 \right]\right)^{\ssy 1/2}$
is the {\sl time discretization error}  at $\tau_m$, and
${\sf E}_{\ssy\sf SDR}^m:=
\left({\mathbb E}\left[\|U^m-U^m_h\|_{\ssy 0,D}^2 \right]\right)^{\ssy 1/2}$
is the {\sl space discretization error} at $\tau_m$.
\par
In the sequel, we estimate the above defined type of error
using a discrete Duhamel principle technique along with the
convergence results for the modified Crank-Nicolson method
obtained in Section~\ref{Section4} (cf. \cite{YubinY04}, 
\cite{YubinY05}, \cite{BinLi}, \cite{KZ2010}, \cite{Zouraris2017}).
%
%
\subsection{Estimating the time discretization error}
%
%
%
\begin{proposition}\label{TimeDiscreteErr1}
Let $(U^m)_{m=0}^{\ssy M}$ be the Crank-Nicolson
time discrete approximations specified
by \eqref{CN_S1}-\eqref{CN_S2}. Then, there exists
a constant $C_{\ssy\sf TDR}>0$, independent of $\dt$, $\dx$ and $\dtau$,
such that
\begin{equation}\label{Dinner0}
\max_{1\leq m \leq {\ssy M}} {\sf E}_{\ssy\sf TDR}^m \leq\,C_{\ssy\sf TDR}
\,\epsilon^{-\half}\,\dtau^{\frac{1}{4}-\epsilon}
\quad\forall\,\epsilon\in\left(0,\tfrac{1}{4}\right].
\end{equation}
\end{proposition}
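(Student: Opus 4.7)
The plan is to reduce the stochastic $L^2_P(L^2_x)$ error to a deterministic spectral sum via a discrete Duhamel principle, and then invoke Proposition~\ref{Propo_Boom1} as a black box. Rewriting \eqref{CN_S2} as $(I-\tfrac{\dtau}{2}\partial^2)U^m = (I+\tfrac{\dtau}{2}\partial^2)U^{m-1} + \int_{\Delta_m}\mathcal{W}\,ds$ and unrolling with $U^0=0$ yields $U^m = \sum_{k=1}^m R^{m-k}S\bigl(\int_{\Delta_k}\mathcal{W}\,ds\bigr)$, where $S := (I-\tfrac{\dtau}{2}\partial^2)^{-1}$ and $R := S(I+\tfrac{\dtau}{2}\partial^2)$. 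Comparison with \eqref{CNDet1}--\eqref{CNDet2} shows that the operator $R^{m-k}S$ is precisely the step-$(m-k+1)$ propagator of the modified Crank--Nicolson scheme of Section~\ref{Section41} applied to the initial datum $\int_{\Delta_k}\mathcal{W}\,ds$. Pairing this with $\uu(\tau_m,\cdot) = \int_0^{\tau_m} \mathcal{S}(\tau_m-s)\,\mathcal{W}(s,\cdot)\,ds$ from \eqref{HatUform} and exchanging sum and integral, the error is represented as
\begin{equation*}
\uu(\tau_m,\cdot) - U^m = \int_0^{\tau_m} \mathcal{A}_m(s)\,\mathcal{W}(s,\cdot)\,ds, \qquad \mathcal{A}_m(s) := \mathcal{S}(\tau_m-s) - R^{m-k(s)}S,
\end{equation*}
where $k(s)$ is the index satisfying $s\in\Delta_{k(s)}$.

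Applying Lemma~\ref{Lhmma1} to the kernel of this linear functional, then the It\^o isometry \eqref{Ito_Isom} together with the contraction bound \eqref{peykos1}, yields
\begin{equation*}
({\sf E}_{\sf TDR}^m)^2 \leq \int_0^{\tau_m}\|\mathcal{A}_m(s)\|_{\rm HS}^2\,ds.
\end{equation*}
Since $\mathcal{S}(t)\varepsilon_i = e^{-\lambda_i^2 t}\varepsilon_i$ and $R^{j-1}S\,\varepsilon_i = r(\lambda_i^2\dtau)^{j-1}\,s(\lambda_i^2\dtau)\,\varepsilon_i$ with $r(z) := (2-z)/(2+z)$ and $s(z) := 2/(2+z)$, diagonalizing in the eigenbasis and changing variables $\sigma := \tau_m - s$ gives
\begin{equation*}
({\sf E}_{\sf TDR}^m)^2 \leq \sum_{i=1}^\infty\sum_{j=1}^m \int_{\tau_{j-1}}^{\tau_j}\bigl|\,e^{-\lambda_i^2\sigma} - r(\lambda_i^2\dtau)^{j-1}\,s(\lambda_i^2\dtau)\,\bigr|^2\,d\sigma.
\end{equation*}
I would then split the integrand as $(e^{-\lambda_i^2\sigma}-e^{-\lambda_i^2\tau_j}) + (e^{-\lambda_i^2\tau_j}-r^{j-1}s(\lambda_i^2\dtau))$ to separate a continuous-semigroup regularity piece from the scalar nodal error of the modified Crank--Nicolson scheme applied to $v_0=\varepsilon_i$.

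The first piece is handled by the elementary estimate $|e^{-\lambda_i^2\sigma}-e^{-\lambda_i^2\tau_j}|^2 \leq e^{-2\lambda_i^2\sigma}\min(1,\lambda_i^4\dtau^2)$; summation in $j$ yields $\min(1/(2\lambda_i^2),\lambda_i^2\dtau^2/2)$, and summation in $i$ (splitting at $\lambda_i^2 \sim 1/\dtau$) gives an $O(\sqrt{\dtau})$ contribution. The second piece equals $\dtau\sum_j\|\VV^j[\varepsilon_i]-v^j[\varepsilon_i]\|_{0,D}^2$ when interpreted in operator terms, which Proposition~\ref{Propo_Boom1} (applied with $v_0=\varepsilon_i$) bounds by $C\,\dtau^\delta\,\lambda_i^{-2(1-\delta)}$ for any $\delta\in[0,1]$. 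Choosing $\delta := \tfrac12-2\epsilon$ with $\epsilon\in(0,\tfrac14]$ and applying \eqref{SR_BOUND} with $c_{\star}=1$ and $\nu=4\epsilon$ gives $\sum_i \lambda_i^{-(1+4\epsilon)} \leq C\,\epsilon^{-1}$, so this piece contributes $C\,\epsilon^{-1}\,\dtau^{1/2-2\epsilon}$. Taking square roots and absorbing the smoother $O(\sqrt{\dtau})$ piece delivers \eqref{Dinner0}, automatically yielding a constant $C_{\sf TDR}$ independent of $\dt$, $\dx$ and $\dtau$. The main obstacle I anticipate is the bookkeeping in the spectral reduction --- matching the Crank--Nicolson step index $j=m-k+1$ to the semigroup time $\tau_m-s\in\Delta_j$ via the change of variables --- and tuning $\delta$ so that the Proposition~\ref{Propo_Boom1} decay $\dtau^{\delta/2}$ balances against the summability blow-up $(1-2\delta)^{-1}$ to produce precisely $\epsilon^{-1/2}\dtau^{1/4-\epsilon}$ after the final square root.
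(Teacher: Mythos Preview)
Your proposal is correct and follows essentially the same approach as the paper: a discrete Duhamel representation, reduction to a Hilbert--Schmidt sum via Lemma~\ref{Lhmma1}, the It{\^o} isometry \eqref{Ito_Isom} and the projection bound \eqref{peykos1}, then the same splitting into a semigroup regularity piece (bounded by $O(\sqrt{\dtau})$, the paper citing \eqref{arodamos5}) and a nodal-error piece handled by Proposition~\ref{Propo_Boom1} with the identical choice $\delta=\tfrac{1}{2}-2\epsilon$ and \eqref{SR_BOUND}. The only cosmetic difference is that the paper keeps the argument in operator form ($\|{\mathcal S}_{\dtau}^{m-\ell+1}-{\mathcal S}(\tau_m-\tau)\|_{\rm HS}$) before splitting at $\tau_{\ell-1}$, whereas you diagonalize in the eigenbasis first and split at $\tau_j$; after the change of variables $\sigma=\tau_m-\tau$, $j=m-\ell+1$, these are the same decomposition.
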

%
%
%
%
%
\begin{proof}
Let ${\sf I}:L^2(D)\to L^2(D)$ be the identity operator,
${\sf Y}:H^2(D)\rightarrow L^2(D)$ be defined by
${\sf Y}:={\sf I}+\tfrac{\dtau}{2}\,\partial^2$
and $\Lambda:L^2(D)\to{\bfdot H}^2(D)$ be the inverse
elliptic operator $\Lambda:=({\sf I}-\tfrac{\dtau}{2}\,\partial^2)^{-1}$.
Then, for $m=1,\dots,M$, we define an operator
${\mathcal Q}^m:L^2(D)\to{\bfdot H}^2(D)$ by
${\mathcal Q}^m:=(\Lambda\circ{\sf Y})^{m-1}\circ\Lambda$,
which has a Green's function ${\mathcal G}_{\ssy{\mathcal Q}^m}$
given by
%
${\mathcal G}_{\ssy{\mathcal Q}^m}(x,y):=
\sum_{\kappa=1}^{\infty}\tfrac{(1-\frac{\dtau}{2}\lambda_{\kappa}^2)^{m-1}}
{(1+\frac{\dtau}{2}\lambda_{\kappa}^2)^{m}}\,\varepsilon_{\kappa}(x)
\,\varepsilon_{\kappa}(y)\quad\forall\,x,y\in{\overline D}$.
%
%
Also, for given $v_0\in{\bfdot H}^1(D)$, let $({\mathcal S}_{\ssy\dtau}^m(v_0))_{m=0}^{\ssy M}$
be the time-discrete approximations of the solution to the deterministic problem \eqref{Det_Parab},
defined by \eqref{CNDet1}--\eqref{CNDet2}.  Then, using a simple induction argument, we conclude that
${\mathcal S}_{\ssy\dtau}^m(v_0)={\mathcal Q}^{m}(v_0)$ for $m=1,\dots,M$.
To simplify the notation, we set
$G_m(\tau;x,y):={\mathcal X}_{(0,\tau_m)}(\tau)
\,\,G_{\tau_m-\tau}(x,y)$ for $m=1,\dots,M$.
Also, we will use the symbol $C$ to denote a generic constant that is independent
of $\dt$, $\dx$ and $\dtau$, and may change value from one line to the other.
\par
An induction argument applied to \eqref{CN_S2}, yields
\begin{equation}\label{Dinner1}
\begin{split}
U^m(x)=&\,\sum_{\ell=1}^{\ssy m}
\int_{\ssy\Delta_{\ell}}{\mathcal S}_{\ssy\dtau}^{m-\ell+1}
\left({\mathcal W}(\tau,x)\right)\,d\tau\\
=&\,\int_0^{\ssy T}\int_{\ssy D}
{\mathcal K}^m(\tau;x,y)\,{\mathcal W}(\tau,y)\,dy d\tau
\quad\forall\,x\in{\overline D},\quad m=1,\dots,M,\\
\end{split}
\end{equation}
with
${\mathcal K}^m(\tau;x,y):=\sum_{\ell=1}^{m}
{\mathcal X}_{\ssy\Delta_{\ell}}(\tau) \,{\mathcal G}_{{\mathcal Q}^{m-\ell+1}}(x,y)$ for
$x,y\in{\overline D}$, $\tau\in[0,T]$.
%
%
%
Then, using \eqref{Dinner1}, \eqref{HatUform}, \eqref{WNEQ2},
\eqref{Ito_Isom}, \eqref{HSxar} and \eqref{peykos1}, we obtain
\begin{equation*}
\begin{split}
{\sf E}_{\ssy\sf TDR}^m=&\,\left(\,{\mathbb E}\left[\,\int_{\ssy D}\left(
\int_0^{\ssy T}\!\!\!\int_{\ssy D}
\,\left(\,{\mathcal K}_{m}-G_{m}\right)(\tau;x,y)\,
{\mathcal W}(\tau,y)\,dyd\tau\,\right)^2dx\,\right]\,\right)^{\ssy 1/2}\\
\leq&\,\left(\,\int_0^{\tau_{m}}\left(\int_{\ssy D}\!\int_{\ssy D}
\,\left(\,{\mathcal K}_{m}(\tau;x,y)
-G_{m}(\tau;x,y)\,\right)^2\,dydx\right)\,d\tau\,\right)^{\frac{1}{2}}\\
\leq&\,\left(\,\sum_{\ell=1}^{m}\int_{\ssy\Delta_{\ell}}
\left\|{\mathcal S}^{{m}-\ell+1}_{\ssy\dtau}
-{\mathcal S}(\tau_{m}-\tau)\right\|_{\ssy\rm HS}^2\;d\tau\,\right)^{\half},
\quad m=1,\dots,M.\\
\end{split}
\end{equation*}
Next, we introduce the following splitting
\begin{equation}\label{peykos2}
\max_{1\leq{m}\leq{\ssy M}}{\sf E}_{\ssy\sf TDR}^m\leq
\max_{1\leq{m}\leq{\ssy M}}{\mathcal A}_{m}
+\max_{1\leq{m}\leq{\ssy M}}{\mathcal B}_{m},
\end{equation}
with
\begin{equation*}
\begin{split}
{\mathcal A}_{m}:=&\,
\left(\,\sum_{\ell=1}^{m}\int_{\ssy\Delta_{\ell}}
\left\|{\mathcal S}^{{m}-\ell+1}_{\ssy\dtau}
-{\mathcal S}(\tau_{m}-\tau_{\ell-1})\right\|_{\ssy\rm HS}^2\;d\tau\,\right)^{\ssy 1/2},\\
{\mathcal B}_{m}:=&\,
\left(\,\sum_{\ell=1}^{m}\int_{\ssy\Delta_{\ell}}
\left\|{\mathcal S}(\tau_{m}-\tau_{\ell-1})
-{\mathcal S}(\tau_{m}-\tau)\right\|_{\ssy\rm HS}^2\;d\tau\,\right)^{\ssy 1/2}.\\
\end{split}
\end{equation*}
\par
Let $\delta\in\left[0,\frac{1}{2}\right)$. Then, using \eqref{September2015_0}
and \eqref{SR_BOUND}, we have
\begin{equation*}
\begin{split}
{\mathcal A}_{m}=&\,\left[\,\sum_{\kappa=1}^{\ssy\infty}\,\left(\,
\dtau\,\sum_{\ell=1}^{m}
\left\|{\mathcal S}^{m-\ell+1}_{\ssy\dtau}(\varepsilon_{\kappa})
-{\mathcal S}(\tau_{m-\ell+1})\varepsilon_{\kappa}\right\|_{\ssy 0,D}^2\,\right)
\,\right]^{\ssy 1/2}\\
=&\,\left[\,\sum_{\kappa=1}^{\ssy\infty}\,\left(\,
\dtau\,\sum_{\ell=1}^{m}
\left\|{\mathcal S}^{\ell}_{\ssy\dtau}(\varepsilon_{\kappa})
-{\mathcal S}(\tau_{\ell})\varepsilon_{\kappa}\right\|_{\ssy 0,D}^2
\,\right)\,\right]^{\ssy 1/2}\\
\leq&\,C\,\dtau^{\frac{\delta}{2}}\,\left(
\,\sum_{\kappa=1}^{\ssy\infty}\|\varepsilon_{\kappa}\|^2_{\ssy
{\bfdot H}^{\delta-1}}\,\right)^{\ssy 1/2}\\
\leq&\,C\,\dtau^{\frac{\delta}{2}}\,\left(
\,\sum_{\kappa=1}^{\ssy\infty}
\tfrac{1}{\lambda_{\kappa}^{2(1-\delta)}}\,\right)^{\ssy 1/2}\\
\leq&\,C\,\dtau^{\frac{\delta}{2}}\,\left(
\,\sum_{\kappa=1}^{\ssy\infty}
\tfrac{1}{\lambda_{\kappa}^{1+4(\frac{1}{4}-\frac{\delta}{2})}}\,\right)^{\ssy 1/2}\\
\leq&\,C\,\left(\tfrac{1}{4}-\tfrac{\delta}{2}\right)^{-\half}\,\dtau^{\frac{\delta}{2}},
\quad m=1,\dots,M,\\
\end{split}
\end{equation*}
which, after setting $\epsilon=\tfrac{1}{4}-\tfrac{\delta}{2}\in(0,\tfrac{1}{4}]$, yields
\begin{equation}\label{peykos3}
\max_{1\leq{m}\leq{\ssy M}}{\mathcal A}_m
\leq\,C\,\epsilon^{-\frac{1}{2}}\,\dtau^{\frac{1}{4}-\epsilon}.
\end{equation}
%
%
%
\par
Now, observing that
${\mathcal S}(t)\varepsilon_{\kappa}=e^{-\lambda_{\kappa}^2t}\,\varepsilon_{\kappa}$
for $t\ge 0$ and $\kappa\in{\mathbb N}$, we obtain
%
%
%
\begin{equation*}
\begin{split}
{\mathcal B}_m=&\,\left(\,\sum_{\kappa=1}^{\ssy\infty}
\sum_{\ell=1}^m\int_{\ssy\Delta_{\ell}} \left(\int_{\ssy D}\left[
e^{-\lambda_{\kappa}^2(\tau_m-\tau_{\ell-1})}
-e^{-\lambda_{\kappa}^2(\tau_m-\tau)}\right]^2
\varepsilon_{\kappa}^2(x)\,dx\right)
\,d\tau\,\right)^{\ssy 1/2}\\
=&\,\left(\,\sum_{\kappa=1}^{\ssy\infty}
\sum_{\ell=1}^m\int_{\ssy\Delta_{\ell}}
e^{-2\lambda_{\kappa}^2(\tau_m-\tau)} \left(1-
e^{-\lambda_{\kappa}^2(\tau-\tau_{\ell-1})}\right)^2
\,d\tau\,\right)^{\ssy 1/2}\\
\leq&\,\left(\,\sum_{\kappa=1}^{\ssy\infty}
\big(1-e^{-\lambda_{\kappa}^2\,\dtau}\big)^2
\,\int_0^{\tau_m} e^{2\lambda_{\kappa}^2(\tau-\tau_m)}
\,d\tau\,\right)^{\ssy 1/2}\\
\leq&\,\left(\,\sum_{\kappa=1}^{\ssy\infty}
\tfrac{\big(1-e^{-2\lambda_{\kappa}^2\,\dtau}\big)^2}
{\lambda_{\kappa}^2}\,\right)^{\ssy 1/2},
\quad m=1,\dots,M,\\
\end{split}
\end{equation*}
from which, applying \eqref{arodamos5}, we obtain
\begin{equation}\label{peykos4}
\max_{1\leq{m}\leq{\ssy M}}{\mathcal B}_m\leq\,\dtau^{\frac{1}{4}}.
\end{equation}
\par
Finally, the estimate \eqref{Dinner0} follows easily combining \eqref{peykos2},
\eqref{peykos3} and \eqref{peykos4}.
\end{proof}
%
%
\subsection{Estimating the space discretization error}
%
%
%
%
\begin{lemma}\label{prasinolhmma}
Let $f\in L^2(D)$ and $g_h$, $\psi_h$,  $z_h\in\fem$, such that
\begin{equation}\label{AuxEq_1}
\psi_h+\tfrac{\dtau}{2}\,\Delta_h\psi_h=P_hf
\quad \text{\it and}\quad
z_h=g_h-\tfrac{\dtau}{2}\,\Delta_hg_h.
\end{equation}
Then there exist functions $G_h$, ${\widetilde G}_h\in C({\overline {D\times D}})$
such that
\begin{gather}
\psi_h(x)=\int_{\ssy D} G_h(x,y)\,f(y)\,dy\quad\forall\,x\in{\overline D},
\label{prasinogreen1}\\
z_h(x)=\int_{\ssy D}{\widetilde G}_h(x,y)\,g_h(y)\,dy\quad\forall\,x\in{\overline D}.
\label{prasinogreen2}
\end{gather}
\end{lemma}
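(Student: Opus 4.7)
The plan is to construct $G_h$ and $\widetilde G_h$ explicitly via a spectral decomposition of the discrete operator $\Delta_h$ on the finite-dimensional space $S_h^r$. First I would observe that, by the definition $(\Delta_h\varphi,\chi)_{\ssy 0,D}=(\partial\varphi,\partial\chi)_{\ssy 0,D}$ and the Poincar\'e--Friedrich inequality \eqref{Poincare}, the operator $\Delta_h:S_h^r\to S_h^r$ is symmetric (with respect to $(\cdot,\cdot)_{\ssy 0,D}$) and strictly positive definite. Hence it admits a complete $L^2(D)$-orthonormal system of eigenfunctions $\{\phi_i\}_{i=1}^{N_h}\subset S_h^r$ with positive eigenvalues $\{\mu_i\}_{i=1}^{N_h}$, i.e.\ $\Delta_h\phi_i=\mu_i\phi_i$. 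Since $S_h^r\subset H_0^1(D)$ consists (in one space dimension) of continuous piecewise polynomials, each $\phi_i$ belongs to $C(\overline D)$.

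Next, I would invert $I+\tfrac{\dtau}{2}\Delta_h$ in $S_h^r$ using this basis. Expanding $\psi_h=\sum_i a_i\phi_i$ and testing \eqref{AuxEq_1} against $\phi_j$ gives $(1+\tfrac{\dtau}{2}\mu_j)a_j=(P_hf,\phi_j)_{\ssy 0,D}=(f,\phi_j)_{\ssy 0,D}$, where the last equality uses $\phi_j\in S_h^r$ and the definition of $P_h$. Since $1+\tfrac{\dtau}{2}\mu_j\ge 1$, the system is uniquely solvable, and
\begin{equation*}
\psi_h(x)=\sum_{i=1}^{N_h}\tfrac{1}{1+\frac{\dtau}{2}\mu_i}\,\phi_i(x)\,(f,\phi_i)_{\ssy 0,D}
=\int_{\ssy D}G_h(x,y)\,f(y)\,dy,
\end{equation*}
with
\begin{equation*}
G_h(x,y):=\sum_{i=1}^{N_h}\tfrac{\phi_i(x)\,\phi_i(y)}{1+\frac{\dtau}{2}\mu_i}.
\end{equation*}
This is a finite sum of continuous tensor products, hence $G_h\in C(\overline{D\times D})$, yielding \eqref{prasinogreen1}.

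For \eqref{prasinogreen2} I would proceed analogously: expanding $g_h=\sum_i(g_h,\phi_i)_{\ssy 0,D}\phi_i$ and applying $I-\tfrac{\dtau}{2}\Delta_h$ termwise gives
\begin{equation*}
z_h(x)=\sum_{i=1}^{N_h}\bigl(1-\tfrac{\dtau}{2}\mu_i\bigr)\phi_i(x)\,(g_h,\phi_i)_{\ssy 0,D}
=\int_{\ssy D}\widetilde G_h(x,y)\,g_h(y)\,dy,
\end{equation*}
with
\begin{equation*}
\widetilde G_h(x,y):=\sum_{i=1}^{N_h}\bigl(1-\tfrac{\dtau}{2}\mu_i\bigr)\,\phi_i(x)\,\phi_i(y),
\end{equation*}
which again lies in $C(\overline{D\times D})$. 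There is no serious obstacle here; the only point needing care is verifying positive definiteness (so that $I+\tfrac{\dtau}{2}\Delta_h$ is invertible and the eigenvalue expansion converges with no vanishing denominators), and that the equality $(P_hf,\phi_i)_{\ssy 0,D}=(f,\phi_i)_{\ssy 0,D}$ is used correctly to pass from $P_hf$ to $f$ in the kernel representation.
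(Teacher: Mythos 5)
Your proposal is correct and follows essentially the same route as the paper: the paper constructs a basis of $\fem$ that is simultaneously $L^2(D)$-orthonormal and orthogonal for the bilinear form $(\partial\cdot,\partial\cdot)_{\ssy 0,D}$, which is exactly your $L^2$-orthonormal eigenbasis of $\Delta_h$, and then writes the same kernels $G_h(x,y)=\sum_j(1+\tfrac{\dtau}{2}\mu_j)^{-1}\phi_j(x)\phi_j(y)$ and ${\widetilde G}_h(x,y)=\sum_j(1-\tfrac{\dtau}{2}\mu_j)\phi_j(x)\phi_j(y)$. Your additional remarks on positive definiteness, continuity of the finite sums, and the identity $(P_hf,\phi_j)_{\ssy 0,D}=(f,\phi_j)_{\ssy 0,D}$ are all accurate and consistent with the paper's argument.
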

%
%
%
%
\begin{proof}
Let $\nu_h:=\text{\rm dim}(\fem)$ and
$\gamma:\fem\times\fem\rightarrow\rset$
be an inner product on $\fem$ defined by
$\gamma(\chi, \varphi) :=(\partial\chi,\partial\varphi)_{\ssy 0,D}$
for $\chi,\varphi\in\fem$.
Then, we can construct a basis $(\varphi_j)_{j=1}^{\ssy \nu_h}$ of $\fem$
which is $L^2(D)-$orthonormal, i.e.,
$(\varphi_i,\varphi_j)_{\ssy 0,D}=\delta_{ij}$ for
$i,j=1,\dots,\nu_h$,
and $\gamma-$orthogonal, i.e. there exist positive
$(\varepsilon_{h,j})_{j=1}^{\ssy \nu_h}$ such that
$\gamma(\varphi_i,\varphi_j)=\varepsilon_{h,i}\,\delta_{ij}$
for $i,j=1,\dots,\nu_h$
(see Sect. 8.7 in \cite{Golub}). Thus, there exist real numbers $(\mu_j)_{j=1}^{\nu_h}$
and $({\widetilde\mu}_j)_{j=1}^{\nu_h}$ such that $\psi_h=\sum_{j=1}^{\nu_h}\mu_j\,\varphi_j$
and $z_h=\sum_{j=1}^{\nu_h}{\widetilde\mu}_j\,\varphi_j$. Then, \eqref{AuxEq_1} yields
%
%
$\mu_{\ell}=\tfrac{1}{1+\frac{\dtau}{2}\,\varepsilon_{h,\ell}}\,\,(f,\varphi_{\ell})_{\ssy 0,D}$
and
${\widetilde\mu}_{\ell}=\left(1-\tfrac{\dtau}{2}\,\varepsilon_{h,\ell}\right)\,(g_h,\varphi_{\ell})_{\ssy 0,D}$
for $\ell=1,\dots,\nu_h$. Thus, we conclude \eqref{prasinogreen1} and
\eqref{prasinogreen2} with
$G_h(x,y)=\sum_{j=1}^{\nu_h}
\tfrac{1}{1+\frac{\dtau}{2}\,\varepsilon_{h,j}}\,\varphi_j(x)\,\varphi_j(y)$
and
${\widetilde G}_h(x,y)=\sum_{j=1}^{\nu_h}
(1-\tfrac{\dtau}{2}\,\varepsilon_{h,j})\,\varphi_j(x)\,\varphi_j(y)$
for $x$, $y\in{\overline D}$.
\end{proof}
%
%
%
%
\begin{proposition}\label{Tigrakis}
Let $(U_h^m)_{m=0}^{\ssy M}$ be the fully dicsrete approximations
defined by  \eqref{FullDE1}--\eqref{FullDE2} and $(U^m)_{m=0}^{\ssy M}$
be the time discrete approximations defined by \eqref{CN_S1}--\eqref{CN_S2}.
Then, there exists a constant $C_{\ssy\sf SDR}>0$,
independent of $\dt$, $\dx$, $\dtau$ and $h$, such that
\begin{equation}\label{Lasso1}
\max_{1\leq{m}\leq {\ssy M}}{\sf E}^m_{\ssy\sf SDR}
\leq\,C_{\ssy\sf SDR}\,\left(\,\epsilon_1^{-\half}\,\,\,h^{\frac{1}{2}-\epsilon_1}
+\epsilon_2^{-\frac{1}{2}}\,\dtau^{\frac{1}{4}-\epsilon_2}\right)
\quad\forall\,\epsilon_1\in\left(0,\tfrac{1}{2}\right],
\quad\forall\,\epsilon_2\in\left(0,\tfrac{1}{4}\right].
\end{equation}
\end{proposition}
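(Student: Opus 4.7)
The plan is to combine a discrete Duhamel identity with Lemma~\ref{Lhmma1}, the It\^o isometry, and the deterministic convergence estimate from Proposition~\ref{X_Aygo_Kokora}. Setting $R_m:=\int_{\ssy\Delta_m}\mathcal{W}\,ds$ and rewriting \eqref{CN_S2} as $(\,{\sf I}-\tfrac{\dtau}{2}\partial^2\,)U^m = (\,{\sf I}+\tfrac{\dtau}{2}\partial^2\,)U^{m-1}+R_m$, the initial condition $U^0=0$ together with the operators $\Lambda$ and ${\sf Y}$ introduced in the proof of Proposition~\ref{TimeDiscreteErr1} yields by induction
\begin{equation*}
U^m=\sum_{\ell=1}^{m}\mathcal{S}_{\ssy\dtau}^{m-\ell+1}R_\ell,
\end{equation*}
where $\mathcal{S}_{\ssy\dtau}^k=(\Lambda\circ{\sf Y})^{k-1}\circ\Lambda$ is the modified Crank--Nicolson propagator from \eqref{CNDet1}--\eqref{CNDet2}. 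An identical manipulation of \eqref{FullDE2}, using $(\Delta_h\varphi,\chi)_{\ssy 0,D}=(\partial\varphi,\partial\chi)_{\ssy 0,D}$ and the fact that $\int_{\ssy\Delta_m}(\mathcal{W},\chi)_{\ssy 0,D}\,ds=(R_m,\chi)_{\ssy 0,D}=(P_hR_m,\chi)_{\ssy 0,D}$ for $\chi\in\fem$, produces $U_h^m=\sum_{\ell=1}^{m}\mathcal{S}_{\ssy\dtau,h}^{m-\ell+1}R_\ell$, with $\mathcal{S}_{\ssy\dtau,h}^k$ the modified Crank--Nicolson fully-discrete propagator from \eqref{CNFD1}--\eqref{CNFD2}; the projector $P_h$ coming from Galerkin testing is absorbed into the initial step $\VV_h^0=P_hv_0$ of that scheme.

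Both propagators admit Green's functions---the former explicitly through the series written in the proof of Proposition~\ref{TimeDiscreteErr1}, the latter by iterating Lemma~\ref{prasinolhmma}---so introducing
\begin{equation*}
\mathcal{K}_m(\tau;x,y):=\sum_{\ell=1}^{m}\mathcal{X}_{\ssy\Delta_\ell}(\tau)
\bigl[\mathcal{G}_{\mathcal{S}_{\ssy\dtau}^{m-\ell+1}}(x,y)-\mathcal{G}_{\mathcal{S}_{\ssy\dtau,h}^{m-\ell+1}}(x,y)\bigr]
\end{equation*}
gives $(U^m-U_h^m)(x)=\int_0^{\ssy T}\!\!\int_{\ssy D}\mathcal{K}_m(\tau;x,y)\,\mathcal{W}(\tau,y)\,dyd\tau$. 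Applying Lemma~\ref{Lhmma1} pointwise in $x$ I would rewrite this as a scalar It\^o integral against $dW$ of ${\sf\Pi}[\mathcal{K}_m(\,\cdot\,;x,\,\cdot\,)]$. Squaring, taking expectations, invoking the It\^o isometry \eqref{Ito_Isom}, integrating in $x$, using the contraction bound \eqref{peykos1} of ${\sf\Pi}$, the piecewise-constant-in-$\tau$ structure of $\mathcal{K}_m$, and the kernel characterization \eqref{HSxar} then yields
\begin{equation*}
\bigl({\sf E}_{\ssy\sf SDR}^m\bigr)^2
\leq\int_{\ssy D}\!\int_0^{\ssy T}\!\!\int_{\ssy D}|\mathcal{K}_m(\tau;x,y)|^2\,dyd\tau dx
=\dtau\sum_{k=1}^{m}\|\mathcal{S}_{\ssy\dtau}^k-\mathcal{S}_{\ssy\dtau,h}^k\|_{\ssy\rm HS}^2.
\end{equation*}

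Expanding this Hilbert--Schmidt norm in the basis $(\varepsilon_\kappa)$ and using $\|\varepsilon_\kappa\|_{\ssy{\bfdot H}^s}=\lambda_\kappa^s$, Proposition~\ref{X_Aygo_Kokora} applied term-by-term with $v_0=\varepsilon_\kappa$ gives, for any $\delta,\theta\in[0,1]$,
\begin{equation*}
\dtau\sum_{k=1}^{M}\|(\mathcal{S}_{\ssy\dtau}^k-\mathcal{S}_{\ssy\dtau,h}^k)\varepsilon_\kappa\|_{\ssy 0,D}^2
\leq C\,\bigl(\dtau^{\delta}\lambda_\kappa^{2\delta-2}+h^{4\theta}\lambda_\kappa^{4\theta-2}\bigr).
\end{equation*}
Summing over $\kappa$ and applying \eqref{SR_BOUND} with $c_\star=1$ and $\nu=1-2\delta$ (respectively $\nu=1-4\theta$), valid for $\delta\in[0,\tfrac{1}{2})$ (respectively $\theta\in[0,\tfrac{1}{4})$), produces
\begin{equation*}
\bigl({\sf E}_{\ssy\sf SDR}^m\bigr)^2\leq C\,(1-2\delta)^{-1}\dtau^{\delta}+C\,(1-4\theta)^{-1}h^{4\theta},
\end{equation*}
and choosing $\delta=\tfrac{1}{2}-2\epsilon_2$ and $\theta=\tfrac{1}{4}-\tfrac{\epsilon_1}{2}$ delivers the claimed bound \eqref{Lasso1}.

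The main obstacle I anticipate is the careful bookkeeping in the two Duhamel representations: verifying that the stochastic CN first step (which, because $U^0=0$, collapses to $U^1=\Lambda R_1$) genuinely agrees with the output of the \emph{modified} deterministic CN operator $\mathcal{S}_{\ssy\dtau}^1=\Lambda$, and that the Galerkin projection $P_h$ arising on the noise integrand is absorbed cleanly by the $P_h$ already built into $\VV_h^0$ in \eqref{CNFD1}, so that the two sums align term-by-term with $\mathcal{S}_{\ssy\dtau}^{m-\ell+1}-\mathcal{S}_{\ssy\dtau,h}^{m-\ell+1}$ as the pointwise difference. Once this alignment is secured and the Hilbert--Schmidt reduction is in place, the remainder of the argument is an $\epsilon$-tuned interpolation of Proposition~\ref{X_Aygo_Kokora} against the spectral series \eqref{SR_BOUND}, entirely parallel to the strategy already exploited in the proofs of Theorem~\ref{model_theorem} and Proposition~\ref{TimeDiscreteErr1}.
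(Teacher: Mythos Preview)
Your proposal is correct and follows essentially the same route as the paper: both derive the discrete Duhamel representations $U^m=\sum_{\ell}\mathcal{S}_{\dtau}^{m-\ell+1}R_\ell$ and $U_h^m=\sum_{\ell}\mathcal{S}_{\dtau,h}^{m-\ell+1}R_\ell$, pass through Lemma~\ref{Lhmma1}, the It\^o isometry, \eqref{peykos1} and \eqref{HSxar} to reduce to $\dtau\sum_k\|\mathcal{S}_{\dtau}^k-\mathcal{S}_{\dtau,h}^k\|_{\rm HS}^2$, and then expand in $(\varepsilon_\kappa)$, invoke Proposition~\ref{X_Aygo_Kokora}, sum via \eqref{SR_BOUND}, and make the same substitutions $\theta=\tfrac{1}{4}-\tfrac{\epsilon_1}{2}$, $\delta=\tfrac{1}{2}-2\epsilon_2$. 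The bookkeeping concerns you flag in your final paragraph are exactly the points the paper handles (implicitly) by noting $\mathcal{S}_h^m(v_0)=\mathcal{Q}_h^m(v_0)$ and that $P_h$ is built into $\Lambda_h$; your treatment of them is correct.
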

%
%
%
%
\begin{proof}
In the sequel, we will use the symbol $C$ to denote a generic constant that is independent
of $\dt$, $\dx$, $\dtau$ and $h$, and may changes value from one line to the other.
\par
Let ${\sf I}:L^2(D)\to L^2(D)$ be the identity operator,
${\sf Y}_h:\fem\rightarrow\fem$ be defined
by ${\sf Y}_h:={\sf I}-\tfrac{\dtau}{2}\,\Delta_h$
and ${\sf\Lambda}_h:L^2(D)\to\fem$ be the inverse discrete elliptic
operator given by ${\sf\Lambda}_h:=({\sf I}+\tfrac{\dtau}{2}\,\Delta_h)^{-1}\circ P_h$.
Also, for $m=1,\dots,M$, we define a discrete operator
${\mathcal Q}_{h}^m:L^2(D)\rightarrow\fem$ by
${\mathcal Q}_{h}^{m}:=({\sf\Lambda}_h\circ{\sf Y}_h)^{m-1}\circ{\sf\Lambda}_h$,
which has a Green's function ${\mathcal G}_{{\mathcal Q}_h^m}$
(see Lemma~\ref{prasinolhmma}).
Also, for given $v_0\in{\bfdot H}^1(D)$, let
$({\mathcal S}_h^m(v_0))_{m=0}^{\ssy M}$
be fully discrete discrete approximations of the solution to the deterministic
problem \eqref{Det_Parab}, defined by \eqref{CNFD1}--\eqref{CNFD2}.
Then, using a simple induction argument, we conclude that
${\mathcal S}_h^m(v_0)={\mathcal Q}_h^{m}(v_0)$
for $m=1,\dots,M$.
\par
Applied an induction argument on \eqref{FullDE2}, we obtain
\begin{equation}\label{Anaparastash2}
\begin{split}
U_h^m(x)=&\,\sum_{\ell=1}^{\ssy m} \int_{\ssy\Delta_{\ell}}
{\mathcal Q}_h^{m-\ell+1}({\mathcal W}(\tau,x))\,d\tau\\
=&\int_0^{\ssy T}\int_{\ssy D}
\,{\mathcal K}_h^m(\tau;x,y)\,{\mathcal W}(\tau,y)
\,dyd\tau\quad\forall\,x\in{\overline D},
\end{split}
\end{equation}
where
${\mathcal K}_h^m(\tau;x,y)
:=\sum_{\ell=1}^m{\mathcal X}_{\ssy\Delta_{\ell}}(\tau)
\,{\mathcal G}_{{\mathcal Q}_h^{m-\ell+1}}(x,y)$ for  all
$\tau\in[0,T]$ and $x,y\in{\overline D}$.
%
%
%
Using \eqref{Anaparastash2}, \eqref{Dinner1}, \eqref{WNEQ2}, \eqref{Ito_Isom},
\eqref{HSxar} and \eqref{peykos1}, we get
\begin{equation*}
\begin{split}
{\sf E}^m_{\ssy\sf SDR}
\leq&\,\left(\,\int_0^{\tau_m}\left(\int_{\ssy D}\int_{\ssy D}
\,\left({\mathcal K}^{m}(\tau;x,y)
-{\mathcal K}_h^m(\tau;x,y)\right)^2
\,dydx\right)\,d\tau\,\right)^{\ssy 1/2}\\
\leq&\,\left(\,\dtau\sum_{\ell=1}^{m}
\left\|{\mathcal S}^{\ell}_{\ssy\dtau}
-{\mathcal S}_h^{\ell}\right\|_{\ssy\rm HS}^2\,\right)^{\ssy 1/2}.\\
\end{split}
\end{equation*}
%
%
Let $\delta\in\left[0,\tfrac{1}{2}\right)$ and $\theta\in\left[0,\frac{1}{4}\right)$.
Using \eqref{X_tiger_river1}, we obtain
\begin{equation*}
\begin{split}
%
%
\left(\,{\sf E}^m_{\ssy\sf  SDR}\,\right)^2
\leq&\,\sum_{k=1}^{\infty}
\,\left(\,\dtau\,\sum_{\ell=1}^{m}
\left\|{\mathcal S}^{\ell}_{\ssy\dtau}(\varepsilon_k)-{\mathcal S}_h^{\ell}(\varepsilon_k)
\right\|_{\ssy 0,D}^2\right)\\
%
\leq&\,C\,\left(\,h^{4\theta}
\,\sum_{k=1}^{\infty}
\|\varepsilon_k\|^2_{\ssy {\bfdot H}^{2\theta-1}}
+\dtau^{\delta}\,\sum_{k=1}^{\infty}
\|\varepsilon_k\|^2_{\ssy {\bfdot H}^{\delta-1}}\,\right)\\
\leq&\,C\,\left(\,h^{4\theta}
\,\sum_{k=1}^{\infty} \tfrac{1}{\lambda_k^{2\,(1-2\theta)}}
+\dtau^{\delta}\,
\sum_{k=1}^{\infty} \tfrac{1}{\lambda_k^{2\,(1-\delta)}}\,\right)\\
\leq&\,C\,\left(\,h^{4\theta}
\,\sum_{k=1}^{\infty} \tfrac{1}{\lambda_k^{1+2\,(\frac{1}{2}-2\theta)}}
+\dtau^{\delta}\,
\sum_{k=1}^{\infty} \tfrac{1}{\lambda_k^{1+4\,(\frac{1}{4}-\frac{\delta}{2})}}\,\right),
\end{split}
\end{equation*}
which, after using \eqref{SR_BOUND}, yields
\begin{equation}\label{Dec2015_10th}
{\sf E}^m_{\ssy\sf SDR}
\leq\,C\,\left[\,h^{2\theta}\,\left(\tfrac{1}{2}-2\theta\right)^{-\half}
+\dtau^{\frac{\delta}{2}}\,\left(\tfrac{1}{4}-\tfrac{\delta}{2}\right)^{-\half}\,\right].
\end{equation}
and
setting $\epsilon_1=\frac{1}{2}-2\theta\in\left(0,\frac{1}{2}\right]$ and
$\epsilon_2=\frac{1}{4}-\tfrac{\delta}{2}\in\left(0,\tfrac{1}{4}\right]$
we, easily, arrive at \eqref{Lasso1}.
\end{proof}
%
\subsection{Estimating the discretization error}
%
\begin{theorem}\label{FFQEWR}
Let ${\widehat u}$ be the solution to the problem \eqref{AC2},
and $(U_h^m)_{m=0}^{\ssy M}$ be the Crank-Nicolson
fully-discrete approximations of ${\widehat u}$ defined by
\eqref{FullDE1}-\eqref{FullDE2}. Then, there exists a constant
$C>0$, independent of  $\dx$, $\dt$, $h$ and $\dtau$,
such that
\begin{equation}\label{Final_Estimate}
\max_{0\leq{m}\leq{\ssy M}}\left({\mathbb E}\left[
 \|U_h^m-{\widehat u}(\tau_m,\cdot)\|_{\ssy 0,D}^2\right]
\right)^{\half} \leq\,C\,\left(\,
\epsilon_1^{-\frac{1}{2}}\,h^{\frac{1}{2}-\epsilon_1}
+\epsilon_2^{-\frac{1}{2}}\,\dtau^{\frac{1}{4}-\epsilon_2}\,\right)
\end{equation}
for all $\epsilon_1\in\left(0,\tfrac{1}{2}\right]$ and
$\epsilon_2\in\left(0,\tfrac{1}{4}\right]$.
\end{theorem}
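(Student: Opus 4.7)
The plan is to obtain \eqref{Final_Estimate} as a straightforward synthesis of the error bounds already established in Propositions~\ref{TimeDiscreteErr1} and~\ref{Tigrakis}, via the triangle-inequality splitting \eqref{split_split}.

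First, I would observe that at $m=0$ both $U_h^0$ and $\widehat{u}(0,\cdot)$ vanish identically (by \eqref{FullDE1} and the initial condition in \eqref{AC2}), so the maximum in the left-hand side of \eqref{Final_Estimate} is actually attained over $1\leq m\leq M$. Next, I would introduce the intermediate time-discrete approximations $(U^m)_{m=0}^{M}$ defined by \eqref{CN_S1}--\eqref{CN_S2} and invoke the splitting \eqref{split_split}:
\begin{equation*}
\max_{0\leq m\leq M}\bigl(\,\mathbb{E}[\,\|\widehat{u}^m-U_h^m\|_{0,D}^2\,]\,\bigr)^{1/2}
\leq \max_{1\leq m\leq M}{\sf E}_{\ssy\sf TDR}^m
+\max_{1\leq m\leq M}{\sf E}_{\ssy\sf SDR}^m.
\end{equation*}

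Then I would apply Proposition~\ref{TimeDiscreteErr1} to the first term on the right-hand side, obtaining a contribution of the form $C_{\ssy\sf TDR}\,\epsilon_2^{-1/2}\,\dtau^{1/4-\epsilon_2}$ for any $\epsilon_2\in(0,\tfrac{1}{4}]$, and Proposition~\ref{Tigrakis} to the second term, obtaining a contribution of the form $C_{\ssy\sf SDR}\,(\epsilon_1^{-1/2}\,h^{1/2-\epsilon_1}+\epsilon_2^{-1/2}\,\dtau^{1/4-\epsilon_2})$ valid for the same range of parameters $\epsilon_1\in(0,\tfrac{1}{2}]$ and $\epsilon_2\in(0,\tfrac{1}{4}]$. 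Adding these two bounds and absorbing the two $\dtau$-terms into one by renaming constants yields \eqref{Final_Estimate} with $C:=C_{\ssy\sf TDR}+C_{\ssy\sf SDR}$, which is independent of $\dt$, $\dx$, $\dtau$ and $h$, as claimed.

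Since the heavy lifting (the discrete Duhamel principle, the Green-function representation for the discrete operators, and the deterministic modified Crank-Nicolson convergence estimates in Section~\ref{Section4}) has already been carried out in Propositions~\ref{TimeDiscreteErr1} and~\ref{Tigrakis}, there is essentially no obstacle here: the theorem is a direct corollary of the two propositions combined with the splitting \eqref{split_split}. The only minor point to verify is that the admissible ranges of $\epsilon_1$ and $\epsilon_2$ appearing in the two sub-estimates are compatible with the ranges stated in Theorem~\ref{FFQEWR}, which is indeed the case.
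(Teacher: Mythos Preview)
Your proposal is correct and follows essentially the same approach as the paper: the paper's proof simply states that \eqref{Final_Estimate} is a direct consequence of the splitting \eqref{split_split} together with the bounds \eqref{Dinner0} and \eqref{Lasso1} from Propositions~\ref{TimeDiscreteErr1} and~\ref{Tigrakis}. Your additional remarks about the $m=0$ case and the compatibility of the $\epsilon$-ranges are fine but not needed beyond what the paper already does implicitly.
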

%
%
\begin{proof}
The error estimate \eqref{Final_Estimate} is a simple consequence
of \eqref{split_split}, \eqref{Dinner0} and \eqref{Lasso1}.
\end{proof}
%
%
\begin{remark}
For an optimal, logarithmic-type choice of the parameter $\epsilon$ in \eqref{ModelError} and of
the parameters $\epsilon_1$ and $\epsilon_2$ in \eqref{Final_Estimate}, we refer the reader to
the discussion in Remark~3 of \cite{KZ2013a}.
\end{remark}
%
%
%
%
%
%
%
%
%
%
\def\cprime{$'$}
\def\cprime{$'$}
\end{document}